\renewcommand{\baselinestretch}{\baselinestretch}
\renewcommand{\baselinestretch}{1.1}
\numberwithin{equation}{section}
\newtheorem{thm}{Theorem}[section]
\newtheorem{lem}[thm]{Lemma}
\newtheorem{cor}[thm]{Corollary}
\newtheorem{prop}[thm]{Proposition}
\theoremstyle{definition}
\newtheorem{defn}[thm]{Definition}
\theoremstyle{remark}
\newtheorem{rmk}[thm]{Remark}
\numberwithin{equation}{section}
\newcommand{\ra}{{\rightarrow}}
\newcommand{\gen}{\text{gen}}
\newcommand{\ord}{\text{ord}}
\newcommand{\z}{{\mathbb Z}}
\newcommand{\q}{{\mathbb Q}}
\newcommand{\f}{{\mathbb F}}
\newcommand{\n}{{\mathbb N}}
\newcommand{\e}{{\epsilon}}
\begin{document}
\title[The number of representations of squares]{The number of representations of squares by integral ternary quadratic forms (II)}

\author{Kyoungmin Kim and Byeong-Kweon Oh}

\address{Department of Mathematical Sciences, Seoul National University, Seoul 151-747, Korea}
\email{kiny30@snu.ac.kr}

\address{Department of Mathematical Sciences and Research Institute of Mathematics, Seoul National University, Seoul 151-747, Korea}
\email{bkoh@snu.ac.kr}

\thanks{This work  was supported by the National Research Foundation of Korea (NRF-2014R1A1A2056296).}

\subjclass[2000]{Primary 11E12, 11E20} \keywords{Representations of ternary quadratic forms, squares}


\begin{abstract} Let $f$ be a positive definite ternary quadratic form. We assume that $f$ is non-classic integral, that is, the norm ideal of $f$ is $\z$.   
We say $f$ is {\it strongly $s$-regular  } if the number of representations of squares of integers by $f$ satisfies the condition in Cooper and Lam's conjecture in \cite {cl}. In this article, we prove that there are only finitely many strongly $s$-regular  ternary forms  up to equivalence if the minimum of the non zero squares that are represented by the form is fixed. In particular, we show that there are exactly $207$ non-classic integral strongly $s$-regular  ternary forms that represent one (see Tables 1 and 2). This result might be considered as a complete answer to a natural extension of Cooper and Lam's conjecture.       

\end{abstract}

\maketitle
\section{Introduction}
For a ternary quadratic form 
$$
f(x_1,x_2,x_3)=\sum_{1 \le i,j\le 3}a_{ij}x_ix_j, \ \ (a_{ij}=a_{ji})
$$ 
we say that $f$ is positive definite if the corresponding matrix $M_f:=(a_{ij})$ is positive definite. The discriminant $df$ of $f$ is defined by $\det(M_f)$. We say $f$ is non-classic integral if the ideal generated by $f(x_1,x_2,x_3)$ for any $(x_1,x_2,x_3) \in \z^3$ is $\z$. This is equivalent to 
$$
a_{ii}, a_{ij}+a_{ji} \in \z \ \ \text{for any}  \ \  i, j \in \{1,2,3\} \quad \text{and} \quad \gcd(a_{11},a_{22}, a_{33}, 2a_{23}, 2a_{31}, 2a_{12})=1.
$$ 
Throughout this article, we assume that every ternary quadratic form $f$ is positive definite and non-classic integral.

For a ternary quadratic form $f$ and an integer $n$, we define  
$$
R(n,f)=\{ (x_1,x_2,x_3) \in \z^3 : f(x_1,x_2,x_3)=n\} \qquad \text{and} \qquad r(n,f)=\vert R(n,f)\vert.
$$ 
Since we are assuming that $f$ is positive definite,   $R(n,f)$ is always  finite. If $r(n,f)$ is nonzero, then we say that $n$ is represented by $f$.  If
the class number of a ternary quadratic form $f$ is one, then for any integer $n$, $r(n,f)$ can be written as a product of local densities that are effectively computable.  
However, as far as the authors know, there is no known effective way to determine $r(n,f)$, for an arbitrary positive ternary quadratic form $f$.

Let $f$ be a  ternary quadratic form. 
 For any integer $n$, let $n_1$ and $n_2$ be positive integers such that 
  $P(n_1)\subset P(8df)$, $(n_2,8df)=1$ and $n=n_1n_2$. Here  $P(n)$ is denoted by the set of prime factors of $n$. 
  The  ternary quadratic form $f$ is called  {\it strongly $s$-regular}  if for any positive integer $n=n_1n_2$,
$$
r(n_1^2n_2^2,f)=r(n_1^2,f)\cdot \prod_{p\nmid 8df}h_p(df,\lambda_p),
$$
where $\lambda_p=\ord_p(n)$ for any prime $p$ and
$$ 
h_p(df,\lambda_p)=\frac{p^{\lambda_p +1}-1}{p-1}-\left(\frac{-df}{p}\right)\frac{p^{\lambda_p}-1}{p-1}.
$$  
  Clearly, if $f$ does not represent any squares of integers, then  $f$ is trivially strongly $s$-regular. So, throughout this article, we always assume that a strongly $s$-regular  ternary form $f$ represents at least one square of an integer. Note that this condition is equivalent to the condition that $f$ represents one over $\q$.

For a positive definite ternary form $f$ and an integer $n$, we define
$$
w(f)=\sum_{[g] \in \gen(f)} \frac1{o(g)} \qquad \text{and} \qquad r(n,\gen(f))=\frac1{w(f)}\sum_{[g] \in \gen(f)} \frac{r(n,g)}{o(g)},
$$ 
where $[g]$ is the equivalence class containing $g$ in the genus $\gen(f)$ of $f$ and $o(f)$ is the order of the isometry group $O(f)$. 
Then the Minkowski-Siegel formula  (see also Theorem 3.2 of \cite{ko}) says that 
$$
r(n_1^2n_2^2,\gen(f))=r(n_1^2,\gen(f))\prod_{p\nmid 8df} h_p(df,\lambda_p).
$$ 
 For any ternary form $f$ with class number one, since $r(n^2,f)=r(n^2,\gen(f))$  for any integer $n$, $f$ is strongly $s$-regular. 
 In the previous article \cite {ko}, we proved that every ternary quadratic form in the genus of $f$ is strongly $s$-regular   if and only if the genus of $f$ is indistinguishable by squares, that is $r(n^2,f')=r(n^2,f)$ for any $f' \in \gen(f)$ and any integer $n$. Also, we completely resolved the conjecture given by Cooper and Lam in \cite {cl}.
 
In this article, we prove that every strongly $s$-regular   form represents all squares that are represented by its genus, and  there are only finitely many strongly $s$-regular   ternary forms  up to equivalence if 
$$
m_s(f)=\min_{n \in \z^{+}} \{ n : r(n^2,f)\ne 0\}
$$ 
is fixed. Furthermore, we show that there are exactly  $207$ strongly $s$-regular   ternary quadratic forms  that represent one (see Tables 1 and 2). In the proof of Lemma \ref {important-1} and Theorem \ref {important-3}, we extensively use mathematics software MAPLE for large amount of computation. 

The term lattice will always refer to a positive definite $\z$-lattice on an $n$-dimensional positive definite quadratic space over $\q$. Let $L=\z x_1+\z x_2+ \dots+\z x_n$ be a $\z$-lattice of rank $n$. We write
$$
L\simeq (B(x_i,x_j)).
$$
The right hand side matrix is called a {\it matrix presentation} of
$L$. If $B(x_i,x_j)=0$ for any $i\ne j$, then we write $L \simeq \langle Q(x_1),Q(x_2),\dots,Q(x_n)\rangle$.

For a ternary $\z$-lattice $L=\z x_1+\z x_2+\z x_3$, the corresponding ternary quadratic form $f_L$ is defined by 
$$
f_L=\sum_{1\le i,j\le 3} B(x_i,x_j) x_ix_j.
$$
 We always assume that a $\z$-lattice $L$ is positive definite and non-classic integral. Recall that a ternary $\z$-lattice $L$ is non-classic integral if the norm ideal $\mathfrak n(L)$ of $L$ is $\z$. Hence $4\cdot dL$ is an integer.
 If $dL$ is not an integer, then the Legendre symbol $\left( \frac {dL} p\right)$ for an odd prime $p$ is defined as  $\left( \frac {4\cdot dL} p\right)$. For any odd integer $n$, we say $n$ (does not) divides $dL$ if $n$ (does not, respectively) divides the integer $4\cdot dL$.

 A binary form $ax^2+bxy+cy^2$ will be denoted by $[a,b,c]$ and a ternary form $ax^2+by^2+cz^2+dyz+ezx+fxy$ will be denoted by $[a,b,c,d,e,f]$. 
   
If an integer $n$ is represented by $L$ over $\z_p$ for any prime $p$ including infinite prime, then we say that $n$ is represented by the genus of $L$, and we write $n \,\,\ra \,\,\gen(L)$.  
When $n$ is represented by the lattice $L$ itself, then we write $n\,\,\ra\,\, L$.   
We always assume that $\Delta_p$ is a non square unit in $\z_p^{\times}$ for any odd prime $p$.

Any unexplained notations and terminologies can be found in  \cite {ki} or \cite {om}.

\section{Strongly $s$-regular ternary lattices}

Let $L$ be a strongly $s$-regular  ternary $\z$-lattice. Since we are assuming that  the genus of $L$ represents at least one square of an integer, we always have
$$
d(L\otimes \q_p)\ne-1 \quad \text{or} \quad S_p(L\otimes \q_p)=(-1,-1)_p \quad  \text{for any prime} ~~ p.
$$

\begin{lem} \label {notrepresent1} Any strongly $s$-regular  ternary $\z$-lattice $L$ represents all squares of integers that are represented by its genus. 
\end{lem}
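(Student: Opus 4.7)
The plan is to exploit the matching Euler-product structures on $r(n^2,L)$ (from strong $s$-regularity) and $r(n^2,\gen(L))$ (from Minkowski--Siegel), reducing the problem to the $8dL$-part of $n$. Given $n$ with $n^2\to\gen(L)$, write $n=n_1n_2$ with $P(n_1)\subset P(8dL)$ and $(n_2,8dL)=1$. Then the Minkowski--Siegel formula gives
$$r(n^2,\gen(L))=r(n_1^2,\gen(L))\prod_{p\nmid 8dL}h_p(dL,\lambda_p),$$
and strong $s$-regularity gives the identical factorization for $r(n^2,L)$ in terms of $r(n_1^2,L)$. A short case analysis on $\left(\frac{-dL}{p}\right)\in\{0,\pm 1\}$ shows $h_p(dL,\lambda_p)\geq 1$ for every $p$ and every $\lambda_p\geq 0$, so the hypothesis $r(n^2,\gen(L))>0$ forces $r(n_1^2,\gen(L))>0$, and the goal $r(n^2,L)>0$ is equivalent to $r(n_1^2,L)>0$. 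Thus everything reduces to showing $r(n_1^2,L)>0$ whenever $r(n_1^2,\gen(L))>0$ and $P(n_1)\subset P(8dL)$.

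I would prove this reduced statement by contradiction. Suppose $r(n_1^2,L)=0$. Applying strong $s$-regularity to $N=n_1k$ for any positive integer $k$ coprime to $8dL$ (so that the $P(8dL)$-part of $N$ is still $n_1$) yields
$$r((n_1k)^2,L)=r(n_1^2,L)\prod_{p\nmid 8dL}h_p(dL,\ord_p(k))=0.$$
On the other hand, $(n_1k)^2\to\gen(L)$ for every such $k$: locally at $q\mid 8dL$ because $n_1^2\to L_q$ (as $n_1^2\to\gen(L)$), and locally at $q\nmid 8dL$ trivially, since $k^2$ is a unit square there. So $L$ would miss an infinite family of squares represented by its genus.

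The contradiction comes from spinor-genus theory for ternary lattices: the set of positive integers represented by $\gen(L)$ but not by $L$ is contained in the union of a finite exceptional set with finitely many spinor-exceptional arithmetic progressions, and classical Tartakovski-type estimates guarantee that every sufficiently large genus-representable integer lying outside the spinor-exceptional classes of $L$ is represented by $L$ itself. Choosing $k$ coprime to $8dL$, large enough and of suitable residue to avoid every spinor-exceptional class of $L$, produces some $(n_1k)^2\to L$, contradicting the vanishing displayed above.

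I expect the principal obstacle to be this last step: rigorously dodging every spinor-exceptional class of $L$ through the single parameter $k$ (coprime to $8dL$), while at the same time making $(n_1k)^2$ exceed the Tartakovski threshold. This requires understanding how the spinor characters attached to $\gen(L)$ restrict to squares of the form $(n_1k)^2$ with $P(n_1)\subset P(8dL)$, and is where the arithmetic structure of ternary quadratic forms does the real work; everything before that step is a bookkeeping consequence of combining strong $s$-regularity with the Minkowski--Siegel formula.
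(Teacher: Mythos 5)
Your reduction to the $8dL$-part is sound and matches the paper's implicit first step: since $h_p(dL,\lambda_p)\ge 1$ for all $p$ and $\lambda_p\ge 0$, both $r(n^2,L)=0$ and $(n_1k)^2\ra\gen(L)$ for all $k$ with $(k,8dL)=1$ follow correctly. The genuine gap is in your final step, and it is not merely a technicality you can defer. Spinor-exceptional sets for a ternary genus are \emph{square classes} $t\,\q^{\times 2}$, not arithmetic progressions: every integer $(n_1k)^2$ lies in the square class of $1$ no matter how you choose $k$, so if the class of perfect squares is spinor-exceptional for $\gen(L)$ there is no congruence condition on $k$ that ``dodges'' it. Such genera exist, and indeed this is exactly the delicate regime the paper cares about (genera with class number $>1$ that may be distinguishable by squares). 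Moreover, the Tartakovski/Duke--Schulze-Pillot theorem guarantees representation of sufficiently large integers by the \emph{spinor genus} (and in its strong form only for primitively represented integers), not by every form in the genus outside the exceptional classes; to rescue your argument you would need the Schulze-Pillot/Earnest--Hsia analysis of how representation of $tq^2$ by a fixed spinor genus varies with the prime $q$ (the splitting behavior of $q$ in the spinor-exceptional field flips or removes the exception), plus a separate treatment of imprimitivity at primes dividing $8dL$. None of this is in your proposal; you correctly identified it as the obstacle, but as written the step fails.

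The paper avoids all of this with a purely algebraic argument you may find instructive: assuming $a^2\ra\gen(L)$ but $r(a^2,L)=0$, strong $s$-regularity forces $r(p^2a^2,L)=0$ for every prime $p\nmid 8dL$; on the other hand, the action of the Hecke operator $T(p^2)$ on the theta series gives
$$
r(p^2a^2,L)+\left(\frac{-dL}{p}\right)r(a^2,L)+p\cdot r\!\left(\frac{a^2}{p^2},L\right)=\sum_{[L']\in\gen(L)}\frac{r^*(pL',L)}{o(L')}\,r(a^2,L'),
$$
and since some $L'\in\gen(L)$ satisfies $r(a^2,L')\ne 0$, the Class Linkage Theorem of Hsia--J\"ochner--Shao supplies a prime $q\nmid 8dL$ with $r^*(qL',L)\ne 0$, whence $r(q^2a^2,L)>0$, a contradiction. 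The Hecke identity plays the role you wanted the spinor-genus machinery to play, but it links $L$ to its genus-mates at a single auxiliary prime without any asymptotic input or exceptional-class bookkeeping; if you want to salvage your analytic route, the missing ingredients are precisely the spinor-exception scaling criterion and the primitivity reduction.
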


\begin{proof}
Let $L$ be a strongly $s$-regular  ternary $\z$-lattice. Suppose that there is an integer $a$ such that  $a^2$ is represented by the genus of $L$, whereas it is not represented by $L$ itself.
Then for any prime $p\nmid 8dL$, if $a=p^t\cdot b$ for some integer $b$ such that $(b,p)=1$, then we have 
$$
r(p^2a^2,L)=r(b^2,L)\left(\frac{p^{t+2}-1}{p-1}-\left(\frac{-dL}{p}\right)\frac{p^{t+1}-1}{p-1} \right)=0.
$$
On the other hand, if we consider the action of the Hecke operator $T(p^2)$ to the theta series given by $L$ for any prime $p\nmid 8dL$,  then we have 
$$
r(p^2a^2,L)+\left(\frac{-dL}{p}\right)r(a^2,L)+p\cdot r\left(\frac{a^2}{p^2},L\right)=\sum_{[L']\in \gen(L)} \frac{r^*(pL',L)}{o(L')}r(a^2,L'),
$$
where $r^*(pL',L)$ is the number of primitive representations of $pL'$ by $L$.   Since $r(a^2,L)=r\left(\frac{a^2}{p^2},L\right)=0,$ we have 
$$
r(p^2a^2,L)=\sum_{[L']\in \gen(L)} \frac{r^*(pL',L)}{o(L')}r(a^2,L').
$$ 
From the assumption, there is a $\z$-lattice $L'\in \gen(L)$ such that $r(a^2,L')\ne0$.  Furthermore, by Class Linkage Theorem given by \cite {hjs}, there is a prime $q\nmid 8dL$ such that $r^*(qL',L)\ne0$. These imply that $r(q^2a^2,L)\ne 0$, which is a contradiction.
\end{proof}

\begin{cor} \label{localcom} Let $L$ be a strongly $s$-regular  ternary $\z$-lattice. Then every integer $m$ such that $m^2$ is represented by $L$ is a multiple of $m_s(L)=m_s(f_L)$. 
\end{cor}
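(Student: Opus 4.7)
The plan is to reduce the condition $r(m^2,L)\neq 0$ to a product of independent local conditions at the primes dividing $8dL$, each of which depends only on the $p$-adic valuation of $m$.

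First, I would invoke Lemma \ref{notrepresent1} to replace the global condition $r(m^2,L)\neq 0$ by the local condition that $m^2 \ra L_p$ for every prime $p$. For any prime $p \nmid 8dL$, the localization $L_p$ is unimodular of rank $3$, hence represents every element of $\z_p$, so this condition is automatic. Therefore $r(m^2,L)\neq 0$ if and only if $m^2 \ra L_p$ for every $p \mid 8dL$.

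Next I would observe that whether $m^2$ is represented by $L_p$ depends only on $v_p(m)$. Indeed, writing $m^2 = u^2 p^{2v_p(m)}$ with $u \in \z_p^\times$, the condition $m^2 \ra L_p$ is equivalent to $p^{2v_p(m)} \ra L_p$, since one may rescale any representing vector by $u$ or $u^{-1}$. For each $p \mid 8dL$, set
$$
t_p := \min\{a \ge 0 : p^{2a} \ra L_p\},
$$
which is finite because the genus of $L$ represents some square by standing assumption, so in particular $L_p$ represents some $p^{2a}$. Put $m_0 := \prod_{p\mid 8dL} p^{t_p}$. Then $m_0^2$ is locally represented at every prime (the primes in $8dL$ by the definition of $t_p$, the others automatically), so $m_0^2 \ra \gen(L)$, and Lemma \ref{notrepresent1} gives $r(m_0^2,L)\neq 0$.

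Finally, any positive integer $m$ with $r(m^2,L)\neq 0$ satisfies $v_p(m) \in \{a \ge 0 : p^{2a}\ra L_p\}$ for each $p \mid 8dL$, hence $v_p(m) \ge t_p = v_p(m_0)$; and for $p \nmid 8dL$ one trivially has $v_p(m) \ge 0 = v_p(m_0)$. This shows $m_0 \mid m$, so in particular $m_0 = m_s(L)$ and every such $m$ is a multiple of $m_s(L)$. The only mildly delicate point is the observation that $n^2 \ra L_p$ depends only on $v_p(n)$, but this is just the standard $p$-adic unit rescaling; there is no real obstacle once Lemma \ref{notrepresent1} is at hand.
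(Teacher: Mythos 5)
Your proof is correct and is essentially the paper's argument spelled out in full: the paper's one-line proof says that $\ord_p(m_s(L))$ is completely determined by $L_p$ via Lemma \ref{notrepresent1}, and your reduction to local conditions, the unit-rescaling observation, and the definition of $t_p=\min\{a\ge 0: p^{2a}\ra L_p\}$ are exactly what that remark compresses. The expanded version is fine, including the use of the standing assumption that $L$ represents some square to guarantee finiteness of $t_p$.
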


\begin{proof} The corollary follows directly  from the fact that for any prime $p$, $\ord_p(m_s(L))$ is completely determined by $L_p$ by Lemma \ref{notrepresent1}.  
\end{proof}

Let $L$ be a ternary $\z$-lattice. For any prime $p$, the $\lambda_p$-transformation (or Watson transformation) is defined as follows: 
$$
\Lambda_p(L)= \{ x \in L : Q(x + z) \equiv Q(z) \  (\text{mod} \ p) \mbox{ for
all $z \in L$}\}.
$$
Let $\lambda_p(L)$ be the non-classic integral
lattice obtained from $\Lambda_p(L)$ by scaling $V=L\otimes \mathbb Q$ by a suitable
rational number. For a positive integer $N=p_1^{e_1}p_2^{e_2}\cdots p_k^{e_k}$,   we  also define 
$$
\lambda_N(L)=\prod_{i=1}^k \lambda_{p_i}^{e_i}(L).
$$
Note that $\lambda_p(\lambda_q(L))=\lambda_q(\lambda_p(L))$ for any primes $p \ne q$. 
\begin{lem} \label{aniso}
Let $L$ be a ternary $\z$-lattice and let $p$ be an odd prime.
If the unimodular component in a Jordan decomposition of $L_p$ is anisotropic, then
$$
r(pn,L)=r(pn,\Lambda_p(L)).
$$
\end{lem}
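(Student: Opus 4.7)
The plan is to show the set-theoretic equality $R(pn,L) = R(pn,\Lambda_p(L))$, from which $r(pn,L)=r(pn,\Lambda_p(L))$ follows. The inclusion $R(pn,\Lambda_p(L))\subseteq R(pn,L)$ is immediate from $\Lambda_p(L)\subseteq L$. For the reverse inclusion I would use the fact that $\Lambda_p(L)$ differs from $L$ only at $p$: localizing at any prime $q\ne p$ gives $\Lambda_p(L)_q=L_q$, so the condition ``$x\in L$ lies in $\Lambda_p(L)$'' is purely local at $p$, i.e.\ it reduces to $x\in\Lambda_p(L_p)$.

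Next I would compute $\Lambda_p(L_p)$ explicitly from a Jordan decomposition. For $p$ odd, the defining congruence $Q(x+z)\equiv Q(z)\pmod p$ for all $z$ is equivalent to $Q(x)\in p\z_p$ together with $B(x,z)\in p\z_p$ for every $z\in L_p$. Writing
$$
L_p = L_0 \perp K,
$$
where $L_0$ is the unimodular component (of rank $1$ or $2$, since a ternary anisotropic unimodular component has rank at most two over $\f_p$) and $K$ has scale contained in $p\z_p$, the pairing condition forces $x_0\in pL_0$ for the $L_0$-component of $x$ while leaving the $K$-component unconstrained; the condition $Q(x)\in p\z_p$ then becomes automatic. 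Thus
$$
\Lambda_p(L_p) \;=\; pL_0 \perp K.
$$

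Now I would take any $x\in L$ with $Q(x)=pn$ and decompose $x=x_0+k$ with $x_0\in L_0$ and $k\in K$. Since $K$ has scale $\subseteq p\z_p$ we have $Q(k)\in p\z_p$, and together with $Q(x)=pn\in p\z_p$ this yields $Q(x_0)\in p\z_p$. Hence the reduction $\bar{x}_0\in L_0/pL_0$ is isotropic for the reduced quadratic form over $\f_p$. The hypothesis that the unimodular component is anisotropic says precisely that this reduced form has no nonzero isotropic vectors, so $\bar{x}_0=0$, i.e.\ $x_0\in pL_0$. Therefore $x\in pL_0\perp K=\Lambda_p(L_p)$, which is what I wanted.

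This argument is short; the only real content is the explicit identification of $\Lambda_p(L_p)$ in the Jordan decomposition, which I anticipate to be the step that needs the most care but is a routine verification. The anisotropy hypothesis is used in exactly one place—to promote $Q(x_0)\equiv 0\pmod p$ to $x_0\in pL_0$—and this is where the argument would fail without it.
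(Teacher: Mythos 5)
Your proof is correct. The paper gives no argument of its own for this lemma---it simply cites Chan and Oh \cite{co}---and your computation (the locality of $\Lambda_p$, so that $\Lambda_p(L)_q=L_q$ for $q\ne p$; the identification $\Lambda_p(L_p)=pL_0\perp K$ from the Jordan splitting, which for odd $p$ uses only unimodularity of $L_0$ and $\mathfrak n(K)=\mathfrak s(K)\subseteq p\z_p$; and the anisotropy of the reduced form on $L_0/pL_0$ forcing $\bar{x}_0=0$) is exactly the standard argument given in that reference, so it matches the intended proof.
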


\begin{proof} See \cite {co}. \end{proof}

\begin{prop} \label {odd} Let $q$ be an odd prime and let $L$ be a ternary $\z$-lattice such that $L_q$ does not represent $1$.  Assume that $L_q\simeq\langle\Delta_q,q^{\alpha}\e_1,q^{\beta}\e_2\rangle$ for $\e_1,\e_2\in\z_q^{\times}$. 
\begin{itemize}
\item [(i)] If $\beta \ge \alpha \ge 2$, then $L$ is strongly $s$-regular  if and only if $\lambda_q(L)$ is strongly $s$-regular. Furthermore, if one of them is true, then   $m_s(L)=q\cdot m_s(\lambda_q(L))$.
\item [(ii)] If $\alpha=1$ and $\beta \ge 2$, then $L$ is  strongly $s$-regular    if and only if $\lambda_q^2(L)$ is strongly $s$-regular. Furthermore, if one of them is true, then   $m_s(L)=q\cdot m_s(\lambda_q^2(L))$.
\end{itemize}
\end{prop}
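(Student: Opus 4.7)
My plan is to use Lemma \ref{aniso} together with an appropriate rational scaling to produce a bijection between square representations of $L$ and those of $\lambda_q(L)$ in Part (i), or $\lambda_q^2(L)$ in Part (ii), and then translate the strongly $s$-regular condition across this bijection.

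A direct calculation at $q$ (using $\Lambda_q(L)=\{x\in L:B(x,L)\subset q\z\}$, valid for odd $q$) gives, in Part (i), $\Lambda_q(L)_q\simeq\langle q^2\Delta_q,q^{\alpha}\e_1,q^{\beta}\e_2\rangle$, and scaling $V$ by $q^{-2}$ yields $\lambda_q(L)_q\simeq\langle\Delta_q,q^{\alpha-2}\e_1,q^{\beta-2}\e_2\rangle$. Since the unimodular component $\langle\Delta_q\rangle$ of $L_q$ is a one-dimensional (hence anisotropic) form, Lemma \ref{aniso} gives $r(qn,L)=r(qn,\Lambda_q(L))$; combining with the scaling identity $r(q^2m,\Lambda_q(L))=r(m,\lambda_q(L))$ produces the key identity
$$
r(q^2k^2,L)=r(k^2,\lambda_q(L)).
$$
In Part (ii), the same type of computation gives $\lambda_q(L)_q\simeq\langle q\Delta_q,\e_1,q^{\beta-1}\e_2\rangle$ after scaling by $q^{-1}$; the unimodular part $\langle\e_1\rangle$ is again anisotropic, so iterating yields $\lambda_q^2(L)_q\simeq\langle\Delta_q,q\e_1,q^{\beta-2}\e_2\rangle$ and $r(q^2k^2,L)=r(k^2,\lambda_q^2(L))$. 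For the $m_s$-formula: $L_q$ takes unit values only in the non-square coset $\Delta_q(\z_q^{\times})^2$, so $r(m^2,L)=0$ for $q\nmid m$; combined with the key identity and Corollary \ref{localcom} (the Watson transformation is a unit scaling at $p\ne q$), the claimed $m_s$-formulas follow.

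To obtain the equivalence of strongly $s$-regular, I substitute $n=qk$ in the defining identity for $L$ and rewrite both sides via the key identity. The discriminant comparison $\ord_q(dL)-\ord_q(d\lambda_q(L))=4$ in Part (i) (respectively $=2$ in Part (ii)) has even $q$-exponent, so $dL/d\lambda_q(L)$ is a square at every $p\ne q$, making the local factors $h_p$ coincide at such $p$. Whenever $q\in P(8d\lambda_q(L))$ (automatic in Part (ii), and equivalent to $\alpha+\beta>4$ in Part (i)), the rewritten $L$-condition becomes term-by-term identical to the $\lambda_q(L)$- (resp.\ $\lambda_q^2(L)$-) condition, closing the equivalence.

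The main obstacle is the boundary case $\alpha=\beta=2$ in Part (i), where $\lambda_q(L)_q\simeq\langle\Delta_q,\e_1,\e_2\rangle$ is unimodular and $q$ drops out of $P(8d\lambda_q(L))$. Then the $\lambda_q(L)$-condition carries an extra local factor $h_q(d\lambda_q(L),\ord_q(k_2))$ absent from the rewritten $L$-condition. I expect to close this gap by a Hecke-theoretic argument at the good prime $q$ of $\lambda_q(L)$, in the spirit of the proof of Lemma \ref{notrepresent1}: applying $T(q^2)$ to the theta series of $\lambda_q(L)$, invoking the Class Linkage Theorem, and translating the vanishing $r(m^2,L)=0$ (for $q\nmid m$) across the key identity forces the identity $r(q^{2s}k_1^2,\lambda_q(L))=r(k_1^2,\lambda_q(L))\,h_q(d\lambda_q(L),s)$ for every $s$ and every $P(8d\lambda_q(L))$-supported $k_1$, which reconciles the two conditions in both directions.
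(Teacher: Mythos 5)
Your main line coincides with the paper's own proof. The paper likewise derives the key identity $r(q^2k^2,L)=r(k^2,\lambda_q(L))$ from Lemma \ref{aniso} applied to the anisotropic unimodular component $\langle\Delta_q\rangle$ together with the rescaling, transports the defining identity of strong $s$-regularity across it in both directions --- applying the hypothesis for $L$ to the integers $qn_1\cdot n_2$, legitimate because $q\mid dL$ --- and obtains $m_s(L)=q\cdot m_s(\lambda_q(L))$ exactly as you do, from the fact that $L_q$ represents no unit squares. Your local computations of $\Lambda_q(L)_q$ and of the two scalings, your two-step treatment of case (ii) (the paper proves only case (i) and leaves (ii) to the reader, and your iterated computation is the intended one), and the parity observation that $dL$ and $d\lambda_q(L)$ (resp.\ $d\lambda_q^2(L)$) differ by $q^4$ (resp.\ $q^2$), so that $h_p$ is unchanged for $p\ne q$, are all correct and are implicit in the paper's manipulation.

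The divergence is your boundary case $\alpha=\beta=2$, and two things should be said about it. First, your worry is legitimate: when $q\nmid d\lambda_q(L)$ the transported identity is a priori weaker than strong $s$-regularity of $\lambda_q(L)$, since the latter now demands the extra factor $h_q(d\lambda_q(L),\ord_q(n))$; notably, the paper's proof does not separate this case at all --- it passes directly from the transported identity to the conclusion that $\lambda_q(L)$ is strongly $s$-regular --- so here you have identified a step the published proof elides rather than one you failed to reproduce. Second, your proposed repair is not yet a proof. The $T(q^2)$ relation expresses $r(q^2a^2,\lambda_q(L))+\left(\frac{-d\lambda_q(L)}{q}\right)r(a^2,\lambda_q(L))+q\,r\left(\frac{a^2}{q^2},\lambda_q(L)\right)$ as a weighted sum of $r(a^2,\lambda')$ over \emph{all} classes $[\lambda']\in\gen(\lambda_q(L))$, and the hypothesis (strong $s$-regularity of $L$) gives no control over these genus-mate terms; the Class Linkage Theorem only supplies non-vanishing of some primitive representation number $r^*(q'\lambda',\lambda_q(L))$, which is the tool that, in Lemma \ref{notrepresent1}, produces a single nonzero representation --- not an exact multiplicative identity. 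So the identity $r(q^{2s}k_1^2,\lambda_q(L))=r(k_1^2,\lambda_q(L))\,h_q(d\lambda_q(L),s)$ is not forced by the ingredients you list. Note also that the converse implication never needs this patch: applying strong $s$-regularity of $\lambda_q(L)$ with trivial coprime part gives $r(q^{2s}m_1^2,\lambda_q(L))=r(m_1^2,\lambda_q(L))h_q(d\lambda_q(L),s)$, and recombining yields the coarser transported identity, which is exactly how the paper's purely formal converse goes through even at the boundary. In sum: your argument matches the paper's wherever the paper's is complete, and at the one point where the paper is silent your sketch honestly flags the difficulty but does not close it.
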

\begin{proof} Since  the proof is quite similar to each other, we only provide  the proof of the case when $\beta \ge \alpha \ge2$.  For any positive integer $n$, let $n_1$ and $n_2$ be positive integers such that 
  $P(n_1)\subset P(8dL)$, $(n_2,8dL)=1$ and $n=n_1n_2$, where $P(n)$ is the set of primes dividing $n$. Suppose that $L$ is strongly $s$-regular. Then we have
$$
r(q^2n_1^2n_2^2,L)=r(q^2n_1^2,L)\prod_{p\nmid8dL}h_p(dL,\lambda_p),
$$
where $\lambda_p$ and $h_p(dL,\lambda_p)$ are defined in the introduction.
Since $r(q^2n_1^2n_2^2,L)=r(n_1^2n_2^2,\lambda_q(L))$ and $r(q^2n_1^2,L)=r(n_1^2,\lambda_q(L))$ by Lemma \ref {aniso}, we have  
$$
r(n_1^2n_2^2,\lambda_q(L))=r(n_1^2,\lambda_q(L))\prod_{p\nmid8dL}h_p(L_p,\lambda_p),
$$
which implies that $\lambda_q(L)$ is strongly $s$-regular.

Conversely, Suppose that $\lambda_q(L)$ is strongly $s$-regular.  Then we have
$$
r(n_1^2n_2^2,\lambda_q(L))=r(n_1^2,\lambda_q(L))\prod_{p\nmid8dL}h_p(L_p,\lambda_p).
$$
Hence if $\ord_q(n_1)\ge1$, then 
$$
r(n_1^2n_2^2,L)=r(n_1^2,L)\prod_{p\nmid8dL}h_p(L_p,\lambda_p).
$$
Note that if $\ord_q(n_1)=0$, then $r(n_1^2n_2^2,L)=r(n_1^2,L)=0$.
Therefore $L$ is a strongly $s$-regular  lattice.  

Now assume that $L$ or $\lambda_q(L)$ is strongly $s$-regular. Since $1$ is not represented by $L_q$, $m_s(L)$ is divisible by $q$. Furthermore, since $r(q^2n,L)=r(n,\lambda_q(L))$ by Lemma \ref{aniso}, we have
$m_s(L)=q\cdot m_s(\lambda_q(L))$. \end{proof}


\begin{prop} \label {even}
Let $L$ be a ternary $\z$-lattice such that $L_2$ does not represent $1$.  Assume that  $L_2\simeq \langle\e_1\rangle\perp M$ for $\e_1\in\z_2^{\times}$. 
\begin{itemize}
\item [(i)]If $M$ is an improper modular lattice with norm contained in $4\z_2$ or  $M\simeq \langle 2^{\alpha}\e_2,2^{\beta}\e_3\rangle$ for $\e_2,\e_3\in\z_2^{\times}$ and nonnegative integers $\alpha, \beta$ such that  $\beta \ge \alpha\ge 2$, then $L$ is strongly $s$-regular  if and only if $\lambda_2(L)$ is strongly $s$-regular. 
Furthermore, if one of them is true, then   $m_s(L)=2\cdot m_s(\lambda_2(L))$.
\item [(ii)]  If $M\simeq \langle 2^{\alpha}\e_2,2^{\beta}\e_3\rangle$ with $\e_2,\e_3\in\z_2^{\times}$ and nonnegative integers $\alpha, \beta \  (\beta \ge \alpha)$ such that   $0 \le \alpha \le 1$, then $L$ is strongly $s$-regular  if and only if $\lambda_2^2(L)$ is strongly $s$-regular. Furthermore, if one of them is true, then   $m_s(L)=2\cdot m_s(\lambda_2^2(L))$.  
\end{itemize}
\end{prop}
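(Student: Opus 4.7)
The plan is to mimic the proof of Proposition \ref{odd}, with Lemma \ref{aniso} replaced by a $2$-adic analogue. The two technical identities one needs to establish are:
\begin{itemize}
\item[(i)] $r(4n,L)=r(n,\lambda_2(L))$ for every positive integer $n$, when $M$ is as in case (i);
\item[(ii)] $r(4n,L)=r(n,\lambda_2^2(L))$ for every positive integer $n$, when $M$ is as in case (ii).
\end{itemize}

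Granting these identities, the equivalence follows exactly as in Proposition \ref{odd}. Writing $n=n_1n_2$ with $P(n_1)\subset P(8dL)$ and $(n_2,8dL)=1$, if $L$ is strongly $s$-regular  then
\begin{equation*}
r(4n_1^2n_2^2,L)=r(4n_1^2,L)\prod_{p\nmid 8dL}h_p(dL,\lambda_p),
\end{equation*}
and substituting the identity on both sides gives the corresponding identity for $\lambda_2(L)$ (resp. $\lambda_2^2(L)$). For the converse direction, one notes that $L_2$ does not represent $1$; combined with the structure of $M$ this forces $r(n_1^2n_2^2,L)=r(n_1^2,L)=0$ whenever $\ord_2(n_1)=0$ in case (i), or $\ord_2(n_1)\le 1$ in case (ii), so those cases are trivial. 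When $\ord_2(n_1)$ is large enough, the identity transfers the strong $s$-regularity condition back to $L$. Finally, the formula $m_s(L)=2\cdot m_s(\lambda_2^k(L))$ with $k\in\{1,2\}$ follows from Corollary \ref{localcom} together with the identity $r(4n,L)=r(n,\lambda_2^k(L))$: since $m_s(L)$ is forced to be even by the local behaviour of $L$ at $2$, the minimum is attained precisely on the image under scaling.

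The main obstacle is establishing the $2$-adic analogues (i) and (ii), for which Lemma \ref{aniso} (valid only for odd primes) is insufficient. The strategy is to argue locally at $2$. In case (i), since $L_2\simeq\langle\e_1\rangle\perp M$ with $\e_1\in\z_2^\times$ and every non-trivial component of $M$ has scale in $2\z_2$ with norm in $4\z_2$, any vector $x=x_1e_1+y$ with $y\in M$ satisfying $Q(x)\equiv 0\pmod 4$ must have $x_1\in 2\z_2$ (because $\e_1 x_1^2\in 4\z_2$ forces $x_1$ even), and then $x$ lies in $\Lambda_2(L)_2$. Conversely, every element of $\Lambda_2(L)_2$ has the shape $2\z_2\cdot e_1+M$. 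This sets up a bijection $R(4n,L)\longleftrightarrow R(n,\lambda_2(L))$ after rescaling. In case (ii) one application of $\Lambda_2$ is not enough to absorb the small modular summand $2^\alpha\e_2$ with $\alpha\in\{0,1\}$; applying $\Lambda_2$ twice shifts that component up to scale at least $2^2\z_2$, after which the same argument applies to $\lambda_2^2(L)$.

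Once the local identity at $2$ is verified, it globalizes by the standard observation that $\Lambda_2(L)=L\cap\Lambda_2(L)_2$ inside $V=L\otimes\q$; representations are detected one prime at a time, and at odd primes $L$ and $\lambda_2(L)$ are identified (after rescaling). The subcases of (i) distinguished by whether $M$ is improper $2$-modular with norm $4\z_2$ or diagonal $\langle 2^\alpha\e_2,2^\beta\e_3\rangle$ with $\beta\ge\alpha\ge 2$ require parallel but essentially identical case analyses; the improper $2$-modular case is slightly more delicate because $M$ is then a binary lattice of type $2\mathbb{A}$ or $2\mathbb{H}$, but the norm condition $\mathfrak n(M)\subset 4\z_2$ is exactly what is needed for the parity argument on $x_1$ to go through.
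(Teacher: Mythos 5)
Your overall plan is exactly the one the paper intends (the paper omits this proof, declaring it ``quite similar to the odd case''), and your identities $r(4n,L)=r(n,\lambda_2(L))$ and $r(4n,L)=r(n,\lambda_2^2(L))$ are the correct $2$-adic analogues of the identity $r(q^2n,L)=r(n,\lambda_q(L))$ underlying Proposition \ref{odd}; your treatment of case (i) is sound, since $\mathfrak n(M)\subseteq 4\z_2$ and $\mathfrak s(M)\subseteq 2\z_2$ make the parity argument on $x_1$ and the verification of the bilinear conditions defining $\Lambda_2(L)$ go through. However, in case (ii) you assert something false: it is not true that $r(n_1^2n_2^2,L)=r(n_1^2,L)=0$ whenever $\ord_2(n_1)\le 1$. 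Take $L=\langle 3,2,24\rangle$, which is in case (ii) with $(\alpha,\beta)=(1,3)$ and whose odd values at $2$ are $\equiv 3,5 \pmod 8$, so $L_2$ does not represent $1$; here $\lambda_2^2(L)\simeq\langle 3,2,6\rangle$ and $r(36,L)=r(9,\langle 3,2,6\rangle)=4\ne 0$, although $\ord_2(6)=1$. Only $\ord_2(n_1)=0$ is the trivial case; $\ord_2(n_1)=1$ must be (and is) handled by the identity itself, since then $4\mid n_1^2$. This slip is repairable, but as written your converse direction asserts a false vanishing statement.

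The more serious gap is in your justification of identity (ii), which is the actual content of the proposition. The chain $r(4n,L)=r(2n,\lambda_2(L))=r(n,\lambda_2^2(L))$ requires, at the middle step, that \emph{every} $v\in\lambda_2(L)$ with $Q(v)$ even lie in $\Lambda_2(\lambda_2(L))$, and your explanation that ``applying $\Lambda_2$ twice shifts that component up to scale at least $2^2\z_2$'' does not address this. When $\alpha=\beta=0$, the lattice $\lambda_2(L)_2$ has scale $\frac12\z_2$ with an improper $\frac12\z_2$-modular binary component, so the defining conditions $Q(v)+2B(v,z)\equiv 0 \pmod 2$ are strictly stronger than evenness of $Q(v)$: if that component were the isotropic plane $\begin{pmatrix}0&\frac12\\ \frac12&0\end{pmatrix}$, with $Q=xy$, then $(1,2,0)$ has even norm but fails the bilinear condition, and the identity $r(2n,\lambda_2(L))=r(n,\lambda_2^2(L))$ breaks. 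What rescues the proposition is precisely the hypothesis you never invoke while proving the identities: $L_2$ not representing $1$ restricts the admissible unit patterns (up to squares only diagonals of type $\langle 3,3,7\rangle$ or $\langle 7,7,7\rangle$ occur when $\alpha=\beta=0$, since any $\e_i\equiv 1,5\pmod 8$ or a pair with $\e_i\e_j\equiv 7\pmod 8$ forces a representation of $1$), which forces the intermediate component to be the anisotropic plane $\begin{pmatrix}1&\frac12\\ \frac12&1\end{pmatrix}$; anisotropy modulo $2$ then forces both coordinates of an even-norm vector to be even, exactly as the anisotropy hypothesis functions in Lemma \ref{aniso}. You must make this anisotropy verification, subcase by subcase, part of the proof of (ii); without it the claimed bijection is simply unproved, and in the isotropic configuration it is false.
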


\begin{proof}
Since the proof is quite similar to the odd case, the proof is left to the readers. \end{proof}

\begin{thm} \label{down} Let $L$ be a strongly $s$-regular  ternary $\z$-lattice. Then there is a positive integer $N$  such that 
\begin{enumerate} 
\item $\lambda_N(L)$ is a strongly $s$-regular  lattice such that $m_s(\lambda_N(L))$ is odd square free;
\item for any prime $p$ dividing $m_s(\lambda_N(L))$,  $\lambda_N(L)_p \simeq \langle \Delta_p,p,-p \rangle$.  
\end{enumerate}          
\end{thm}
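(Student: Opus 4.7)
\emph{Plan.} We argue by induction on $m_s(L)$. The base case $m_s(L)=1$ is trivial ($N=1$). For the inductive step, pick any prime $p \mid m_s(L)$; by Corollary \ref{localcom} this is equivalent to $1 \nra L_p$. The aim is to show that either some hypothesis of Proposition \ref{odd} (if $p$ is odd) or Proposition \ref{even} (if $p=2$) applies, so that passing to $\lambda_p(L)$ or $\lambda_p^2(L)$ produces a strongly $s$-regular lattice with $m_s$ strictly smaller (and we invoke induction); or else $L$ is already in the terminal local shape $\langle \Delta_p, p, -p\rangle$ at $p$.

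For an odd prime $q \mid m_s(L)$, non-classic integrality together with $1 \nra L_q$ forces the Jordan decomposition
\[
L_q \simeq \langle \Delta_q,\, q^\alpha \e_1,\, q^\beta \e_2 \rangle, \qquad 1 \le \alpha \le \beta,\ \e_1,\e_2 \in \z_q^{\times}.
\]
The sub-cases $\alpha \ge 2$ and $(\alpha=1,\,\beta \ge 2)$ are precisely Proposition \ref{odd}(i) and (ii); we apply the corresponding transformation and invoke the inductive hypothesis. In the remaining sub-case $\alpha=\beta=1$, I claim $\e_1\e_2 \equiv -1 \pmod{(\z_q^{\times})^2}$, whence $L_q \simeq \langle \Delta_q, q, -q\rangle$ and $\ord_q(m_s(L))=1$. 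Indeed, if $\e_1\e_2 \not\equiv -1 \pmod{(\z_q^{\times})^2}$, the binary form $\e_1 y^2 + \e_2 z^2$ is anisotropic over $\z_q$ and therefore takes only values of even $q$-valuation. A parity-of-valuation analysis of the equation $\Delta_q x^2 + q(\e_1 y^2 + \e_2 z^2) = q^{2k}$, casing on $\ord_q(x)$ versus $k$, then shows $L_q$ represents no $q^{2k}$, contradicting the standing assumption that $L$ represents an integer square globally.

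For $p=2$, non-classic integrality furnishes a vector of unit norm in $L_2$, and hence a splitting $L_2 \simeq \langle \e_1\rangle \perp M$ with $\e_1 \in \z_2^{\times}$, matching the setup of Proposition \ref{even}. A case analysis on the Jordan type of $M$ shows that, under the constraint $1 \nra L_2$, every admissible $M$ falls into case (i) or case (ii) of Proposition \ref{even}; the binary $M$'s not explicitly listed there are precisely those for which $1 \ra L_2$ holds automatically, so they correspond to having already finished at $p=2$. Iterating $\lambda_2$ or $\lambda_2^2$ until $1 \ra L_2$ removes the prime $2$ from $m_s$ entirely, yielding the oddness in condition (1); combined with the terminal analysis at each odd prime, $m_s(\lambda_N(L))$ is odd and square-free with the prescribed local shapes at each of its prime divisors.

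The main obstacle is the $2$-adic enumeration: one must verify checklist-style that every Jordan type of $L_2$ compatible with non-classic integrality and $1 \nra L_2$ fits one part of Proposition \ref{even}, so the descent at $2$ cannot stall before $1 \ra L_2$. At odd primes the analogous step is cleaner because $\z_q$-lattices diagonalize and the valuation-parity obstruction for anisotropic binaries is rigid.
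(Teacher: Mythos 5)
Your proposal is correct and follows essentially the same route as the paper: repeated descent via Propositions \ref{odd} and \ref{even} (with Lemma \ref{notrepresent1} giving $p \mid m_s(L) \Leftrightarrow 1 \nra L_p$), with the leftover odd-prime case $\alpha=\beta=1$ forced into the terminal shape $\langle \Delta_q, q, -q\rangle$ by exactly the valuation-parity argument you give, which the paper leaves implicit in its standing assumption that the genus represents a square. One small repair at $p=2$: the binary $M$'s missing from Proposition \ref{even} are not all cases where $1 \ra L_2$ holds automatically --- for instance $L_2 \simeq \langle \e_1 \rangle \perp \begin{pmatrix} 2&1\\1&2 \end{pmatrix}$ with $\e_1 \equiv 5 \pmod 8$ has $1 \nra L_2$ and in fact represents no square at all (both summands are anisotropic with valuations of opposite parity and incompatible leading square classes), and it is excluded not locally but by the standing hypothesis $d(L\otimes \q_2)\ne -1$ or $S_2(L\otimes \q_2)=(-1,-1)_2$ from the start of Section 2, so your dichotomy should read ``either $1 \ra L_2$, or $L_2$ represents no squares and the case is ruled out globally.''
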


\begin{proof}  By Propositions \ref{odd} and \ref{even}, if $p^2$ divides $m_s(L)$ for some prime $p$, then $\lambda_p(L)$ or $\lambda_p^2(L)$ is also strongly $s$-regular.   Hence by taking $\lambda_p$-transformations to $L$ repeatedly, if needed, we may find an integer $n$ such that $\lambda_n(L)$ is a strongly $s$-regular  lattice such that $m_s(\lambda_n(L))$ is odd square free. If $m_s(\lambda_n(L))$ is one, then there is nothing to prove.  
 Assume that $m_s(\lambda_n(L))=p_1p_2\cdots p_t$ where $p_i\ne p_j$ are primes.  Assume that  $p=p_i$ for some $i=1,2,\dots,t$. Then 
$1$ is not represented by $\lambda_n(L)_p$ by Lemma \ref{notrepresent1}. Hence  by Proposition \ref{odd},  either  $\lambda_p^{\iota}(\lambda_n(L))$ is a strongly $s$-regular  lattice such that 
$$
p\cdot m_s(\lambda_p^{\iota}(\lambda_n(L)))=m_s(\lambda_n(L)) \quad \text{and} \quad 1 \ra (\lambda_p^{\iota}(\lambda_n(L)))_p,
$$ 
where $\iota=1$ or $2$  depending on the structure of $(\lambda_n(L))_p$, or
\begin{equation} \label{terminal}
L_p \simeq   \langle \Delta_p,p,-p \rangle.
\end{equation}     
 If $n'$ is the product of primes satisfying the first condition, then $N=n\cdot n'$ satisfies all conditions given in the statement of the theorem. 
\end{proof}

\begin{defn} A strongly $s$-regular  ternary $\z$-lattice is called {\it terminal} if $m_s(L)$ is an odd square free integer, and for any prime $p$ dividing $m_s(L)$, $L_p$ satisfies the above condition \eqref{terminal}.
\end{defn}

Note that for any strongly $s$-regular  lattice $L$, there is an integer $N$ such that $\lambda_N(L)$ is a terminal strongly $s$-regular  lattice by Theorem \ref{down}. Therefore, to classify all strongly $s$-regular  lattices, in some sense, it suffices to find all terminal strongly $s$-regular  lattices. 

\begin{rmk} In fact, there are infinitely many terminal strongly $s$-regular  ternary $\z$-lattices. To show this,  let $q$ be a prime such that $q \equiv 5 \pmod 8$. We prove that the diagonal ternary lattice $L(q)=\langle 2,q,q\rangle$ is a terminal strongly $s$-regular  $\z$-lattice for any prime $q$ satisfying the above condition. 

If $n$ is not divisible by $q$, then $r(n^2,L(q))=0$. Furthermore 
$$
r(q^2n^2,L(q))=r(qn^2,\lambda_q(L(q))),
$$ 
where $\lambda_q(L(q))=\langle 1,1,2q\rangle$.  Let $\lambda_q(L(q))=\z x_1+\z x_2+\z x_3$ 
such that $(B(x_i,x_j))=\text{diag}(1,1,2q)$. Let $u$ be an integer such that $u^2\equiv -1 \pmod q$. Let $z=ax_1+bx_2+cx_3 \in \lambda_q(L(q))$ such that $Q(z)=qn^2$. 
Then, since   
$$
Q(z)=a^2+b^2+2qc^2 \equiv a^2-u^2b^2 \equiv (a-ub)(a+ub) \equiv 0 \pmod q,
$$
$z \in L(q,+):=\z(qx_1)+\z(ux_1+x_2)+\z x_3$ or $z \in L(q,-):=\z(qx_1)+\z(-ux_1+x_2)+\z x_3$. Note that $L(q,+)\cap L(q,-)=\z(qx_1)+\z(qx_2)+\z x_3$. Furthermore, 
$d(L(q,\pm))=2q^3$ and the scale of each lattice is $q\z$. Hence, we have
\begin{equation} \label{exam1}
r(q^2n^2,L(q))=r(qn^2,\lambda_q(L(q)))=2r(n^2,\langle 1,1,2\rangle)-r(n^2,L(q)).
\end{equation}
Therefore if we use an induction on $\ord_q(n)$,  the assertion follows directly from the fact that $\langle 1,1,2\rangle$ is strongly $s$-regular. Furthermore, since every $\z$-lattice in the genus of $L(q)$ satisfies the equation \eqref{exam1}, the genus of $L(q)$ is indistinguishable by squares.  
\end{rmk}

\begin{thm} For any positive integer $m$, 
 there are only finitely many strongly $s$-regular  ternary $\z$-lattices $L$ with $m_s(L)=m$. 
\end{thm}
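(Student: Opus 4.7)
The plan is to invoke Theorem~\ref{down} to reduce to the finitely many terminal strongly $s$-regular lattices that can arise as $\lambda_N(L)$, and then to bound the number of lifts back to $L$.

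First, given $L$ strongly $s$-regular with $m_s(L)=m$, Theorem~\ref{down} produces an integer $N$ with $L_0:=\lambda_N(L)$ terminal. By Propositions~\ref{odd} and~\ref{even}, each single $\lambda_p$- or $\lambda_p^2$-step divides $m_s$ by exactly one factor of $p$, so $m_s(L_0)$ is an odd square-free divisor of $m$ and $N$ is supported on the prime divisors of $m$. Hence both $m_s(L_0)$ and the sequence of $\lambda$-steps taking $L$ to $L_0$ admit only finitely many possibilities once $m$ is fixed.

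Next, for each odd square-free $m'\mid m$, I would prove that there are only finitely many terminal strongly $s$-regular lattices $L_0$ with $m_s(L_0)=m'$. At every $p\mid m'$ the terminal condition pins down $L_0{}_p\simeq\langle\Delta_p,p,-p\rangle$, so $\ord_p(dL_0)=2$. At each prime $p\nmid m'$, Corollary~\ref{localcom} forces $L_0{}_p$ to represent $1$. The crucial step is to upgrade this to \emph{unimodularity} of $L_0{}_p$ for $p\nmid m'$, by comparing $r(n^2,L_0)$ with the Minkowski--Siegel formula for $r(n^2,\gen(L_0))$ and extracting a contradiction with strong $s$-regularity whenever $p\mid dL_0$ but $p\nmid m'$. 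Once unimodularity holds away from $m'$, the discriminant $dL_0$ is bounded purely in terms of $m'$; combined with the bound on the minimum of $L_0$ coming from $(m')^2\ra L_0$, classical reduction theory then yields only finitely many $L_0$ up to equivalence. Finally, for each terminal $L_0$ and each admissible $N$, the set of strongly $s$-regular $L$ with $\lambda_N(L)=L_0$ and $m_s(L)=m$ is finite: each $\lambda_p$-step realises (a scaling of) $L_0$ as a sublattice of $L$ of bounded index, so reconstructing $L$ from $L_0$ amounts to choosing a $p$-elementary overlattice of bounded size with prescribed Jordan type at $p$, of which there are only finitely many choices.

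The main obstacle is the unimodularity claim: the bound genuinely requires the strongly $s$-regular hypothesis, since diagonal lattices such as $\langle 1,N,N\rangle$ represent $1$ (so $m_s=1$) while $dL=N^2$ is unbounded. I expect the correct argument to proceed prime by prime along the lines of the Hecke-operator computation appearing in the proof of Lemma~\ref{notrepresent1}, using the Minkowski--Siegel identity for squares to produce a contradiction whenever $L_0{}_p$ is non-unimodular at a prime $p\nmid m_s(L_0)$.
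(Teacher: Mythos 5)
Your outer framework coincides with the paper's: both invoke Theorem \ref{down} to pass to a terminal lattice, and both use the fact that for a fixed ternary lattice $K$ and prime $p$ there are only finitely many lattices whose $\lambda_p$-transformation is isometric to $K$, so that finiteness for terminal lattices suffices. The gap is in the step you yourself flag as crucial, and it is not merely unproven but false: strong $s$-regularity does \emph{not} force $(L_0)_p$ to be unimodular at primes $p\nmid m_s(L_0)$. The paper's own tables refute this: $\langle 1,1,3\rangle$ (the entry $\ell=[1,0,3]$ of Table 1) is strongly $s$-regular with $m_s=1$ and $dL=3$, and many further entries with $m_s(L)=1$ have $3$, $5$ or $7$ dividing $dL$. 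Worse for your plan, the remark following Theorem \ref{down} exhibits an \emph{infinite} family of terminal strongly $s$-regular lattices $L(q)=\langle 2,q,q\rangle$, $q\equiv 5\pmod 8$, with $m_s(L(q))=q$ and $\ord_2(dL(q))=1$, so non-unimodularity at the prime $2\nmid m_s$ is unavoidable. Consequently no Hecke-operator or Minkowski--Siegel manipulation along the lines of Lemma \ref{notrepresent1} can produce the contradiction you hope for, and the discriminant bound you derive from unimodularity has no support.

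What the paper does instead is purely quantitative, with no local rigidity away from $m$. Take a Minkowski reduced basis, $L=[a,b,c,2d,2e,2f]$ with $a\le b\le c$; since $m^2\ra L$ one has $a\le m^2$, and every representation of an integer smaller than $c$ lies in the binary section $[a,2f,b]$, whose representation numbers obey divisor-type bounds such as $r(m^2p^{2\lambda},[a,2f,b])\le 6\cdot 3^{s}(2\lambda+1)$, where $s$ is the number of prime factors of the (odd square-free) $m$. On the other hand, strong $s$-regularity together with $r(m^2,L)\ge 2$ forces the exponential lower bound $r(m^2p^{2\lambda},L)\ge 2p^{\lambda}$ for any prime $p\nmid 8m\,dL$ (the terminal hypothesis guarantees that a prime not dividing $dL$ cannot divide $m$). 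Letting $p_{t_1}$ be the first odd prime not dividing $dL$, Bertrand--Chebyshev shows $t_1$ is bounded in terms of $m$: if $t_1$ were large, then $p_1\cdots p_{t_1-1}\mid dL$ already forces $c>m^2p_{t_1}^2$, and the two counts for $r(m^2p_{t_1}^2,L)$ contradict each other. With $p_{t_1}$ thus bounded, choosing $\lambda_0$ so that $p_{t_1}^{\lambda_0}>3^{s+1}(2\lambda_0+1)$ forces $c\le m^2p_{t_1}^{2\lambda_0}$, whence $dL$ is bounded in terms of $m$ alone and reduction theory finishes. Your reduction to terminal lattices and your treatment of the lifts can be kept verbatim; the unimodularity step must be replaced by this exponential-versus-polynomial comparison of representation numbers.
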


\begin{proof}
Let $L$ be a strongly $s$-regular  ternary $\z$-lattice with $m_s(L)=m$. Since for any ternary lattice $K$ and any prime $p$, there are only finitely many lattices whose $\lambda_p$-transformation is isometric to $K$,  it suffices to show that there are only finitely many terminal strongly $s$-regular  lattice $L$ such that $m_s(L)=m$ under the assumption that $m=q_1q_2\cdots q_s$ is an odd square free integer. When $m=1$, then we let $s=0$. 

 Let $\{x_1,x_2,x_3\}$ be a Minkowski reduced basis for $L$ such that
 $$
(B(x_i,x_j))\simeq \begin{pmatrix} a&f&e\\f&b&d\\e&d&c \end{pmatrix} \qquad (0\le a\le b \le c \quad \text{and} \quad 2|f|\le a,~2|e|\le a,~2|d|\le b).
$$
Recall that $a,b,c,2d,2e,2f$ are relatively prime integers and $L=[a,b,c,2d,2e,2f]$.  
Let $p_t$ be the $t$-th smallest odd prime so that $p_1=3$, $p_2=5$ and so on.  Define 
$$
t'=\min \{t\in \n \mid 4m^6p_{t}^4<p_1\cdots p_{t-1} \}.
$$
 Note that such an integer always exists  by Bertrand-Chebyshev Theorem. Let $t''$ be the smallest integer such that $2p_{t''}>6\cdot3^{s+1}$ and $t_0=\max\{t', t''\}$. Finally, let $t_1$  be the integer such that $p_1p_2\cdots p_{t_1-1} \mid dL$, but  $p_{t_1} \nmid dL$. 

First, assume that $t_1\ge t_0$.  Then  
$$
4m^6p_{t_1}^4<p_1p_2\cdots p_{t_1-1}<4dL\le 4abc\le 4ac^2\le 4m^2c^2.
$$ 
Hence $m^2p_{t_1}^2<c$ and  we have 
$$
r(m^2p_{t_1}^2,L)=r\left(m^2p_{t_1}^2,\begin{pmatrix}a&f\\f&b\end{pmatrix}\right)\le 6\cdot3^{s+1}.
$$
However, since $p_{t_1} \nmid 8mdL$, we have
$$
r(m^2p_{t_1}^2,L)=r(m^2,L)\left(p_{t_1}+1-\left(\frac{-dL}{p_{t_1}}\right)\right)\ge 2p_{t_1} \ge 2p_{t''}>6\cdot3^{s+1}.
$$
This is a contradiction.

Finally, assume that $t_1<t_0$. Choose a positive integer $\lambda_0$ such that $p_{t_1}^{\lambda_0}>3^{s+1}(2\lambda_0+1)$. If $c>m^2p_{t_1}^{2\lambda_0}$, then
$$
r(m^2p_{t_1}^{2\lambda_0},L)=r\left(m^2p_{t_1}^{2\lambda_0},\begin{pmatrix}a&f\\f&b\end{pmatrix}\right)\le6\cdot3^s(2\lambda_0+1).
$$
This is a contradiction for
$$
r(m^2p_{t_1}^{2\lambda_0},L)=r(m^2,L)\left( \frac{p_{t_1}^{\lambda_0+1}-1}{p_{t_1}-1}-\left(\frac{-dL}{p_{t_1}}\right)\frac{p_{t_1}^{\lambda_0}-1}{p_{t_1}-1}\right)\ge2p_{t_1}^{\lambda_0}.
$$
 Therefore we have $c\le m^2p_{t_1}^{2\lambda_0}$, which implies that the discriminant of $L$ is bounded by a constant depending only on $m$. This completes the proof. 
\end{proof}

\section{Strongly $s$-regular  ternary lattices representing one}

The aim of this section is to find all strongly $s$-regular  ternary lattices $L$ with $m_s(L)=1$. 
Recall that we are assuming that the norm $\mathfrak n(L)$ of a $\z$-lattice $L$ is $\z$. Hence the scale $\mathfrak s(L)$ of $L$ is $\z$ or $\frac12\z$.

\begin{lem} \label {important-1}  Let $L$ be a strongly $s$-regular  ternary $\z$-lattice with $m_s(L)=1$. If $\mathfrak s(L)=\z$  {\rm ($\mathfrak s(L)=\frac12\z$)}, then $dL$ is not divisible by at least one prime in $\{3,5,7\}$ {\rm ($\{3,5,7,11\}$, respectively)}. 
\end{lem}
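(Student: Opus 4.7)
The plan is to argue by contradiction, with parallel arguments in both cases; I describe the classical case ($\mathfrak{s}(L)=\z$, with all of $3,5,7$ dividing $dL$) in detail, the half-integral case (with $3,5,7,11$ dividing $dL$) being analogous. Since $m_s(L)=1$, the lattice represents $1$, and in any Minkowski reduced basis of $L$ the first diagonal entry is $a=1$. The reduction constraints $2|e|,\,2|f|\le a=1$ combined with $e,f\in\z$ force $e=f=0$, so $L\simeq\langle 1\rangle\perp K$ where $K=[b,c,2d]$ is a Minkowski reduced binary classical lattice with $dK=bc-d^2=dL$, and the assumption becomes $105\mid bc-d^2$. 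In the non-classic case only $e,f\in\{-\tfrac12,0,\tfrac12\}$, giving a handful of twisted shapes to treat in parallel.

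For each prime $p\in\{3,5,7\}$, the localization $L_p$ represents $1$ and is not $p$-unimodular, so $L_p\simeq\langle 1\rangle\perp K_p$ with $K_p$ carrying a nontrivial Jordan component at $p$; up to $\z_p^{\times}$-squares there are only finitely many such profiles per prime, and combined with the finitely many local possibilities at $p=2$ one obtains a finite list of candidate genera for $L$. The key quantitative input is the strongly $s$-regular identity: for any odd prime $q\nmid 8dL$,
\[
r(q^2,L)=r(1,L)\!\left(q+1-\left(\tfrac{-dL}{q}\right)\right)\ge 2q,
\]
whereas the splitting $L=\langle 1\rangle\perp K$ yields the trivial upper bound $r(q^2,L)\le 2\sum_{|x|\le q}r(q^2-x^2,K)$. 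Since $K$ is Minkowski reduced with $3\cdot 5\cdot 7\mid dK$, its representations are sparse, and choosing $q=11$ (or $q=13$) forces an effective bound $dL\le B$. Within $dL\le B$ and $105\mid dL$, only finitely many reduced triples $(b,c,d)$ survive.

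For each of the surviving candidates I would invoke MAPLE to compute $r(n^2,L)$ for test values $n$ supported at $P(8dL)\cup\{q\}$ and to check the strongly $s$-regular identity; every candidate must fail the check, producing the required contradiction. The main obstacle is controlling the volume of this case analysis — the product of local profiles with reduced shapes within $B$ is long but finite — which is precisely why the authors explicitly acknowledge MAPLE in the proof of this lemma. In the half-integral case, the loss of classicality at $2$ weakens the trivial bound on $r(q^2,L)$, and including $11$ among the bad primes is what restores enough strength in the same inequality for the analogous enumeration to close.
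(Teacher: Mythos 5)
Your core mechanism is the paper's: play the strongly $s$-regular identity $r(q^2,L)=r(1,L)\left(q+1-\left(\frac{-dL}{q}\right)\right)\ge 2q$ at an auxiliary odd prime $q\nmid 8dL$ against an upper bound coming from Minkowski reduction, then dispose of the bounded leftover cases by computer; the splitting $L\simeq\langle1\rangle\perp K$ in the classical case is likewise shared with the paper. But there is a genuine gap at the crucial step: you choose $q=11$ (or $q=13$) once and for all, which tacitly assumes that one of these primes misses $dL$. The hypothesis only says $3,5,7\mid dL$ (resp.\ $3,5,7,11\mid dL$); nothing prevents $dL$ from being divisible by every odd prime up to an arbitrarily large bound, in which case the smallest admissible $q$ is the first missing prime $p_{t+1}$, your constant $B=B(q)$ grows with it, and your enumeration never terminates. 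This unbounded regime is exactly where the paper's proof spends its effort: writing $p_1<p_2<\cdots$ for the odd primes and supposing $p_1\cdots p_t\mid dL$ while $p_{t+1}\nmid dL$, for $t\ge 7$ the Bertrand--Chebyshev theorem gives $4p_{t+1}^4<p_1\cdots p_t\le 4dL\le 4b^2$ with $b$ the largest reduced diagonal entry, hence $b>p_{t+1}^2$, so every representation of $p_{t+1}^2$ lies in the binary section $[1,2e,a]$ and $r(p_{t+1}^2,L)\le 18<46\le 2p_{t+1}$, a contradiction; only the finitely many cases $t=4,5,6$ (and their classical analogues) are sent to MAPLE. Without some such device, your proof fails precisely for discriminants rich in small primes.

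Two further local defects. Your ``finite list of candidate genera'' paragraph is false as stated: the Jordan components of $L_p$ at $p\in\{3,5,7\}$ can carry arbitrarily high powers of $p$, so infinitely many genera are consistent with your local constraints; fortunately that paragraph carries no weight in your argument. More seriously, the stated upper bound $r(q^2,L)\le 2\sum_{|x|\le q}r(q^2-x^2,K)$ points the wrong way: summed over $2q+1$ shells it is far larger than $2q$, so it yields neither a contradiction nor any bound on $dL$. What is actually needed --- and what the paper uses --- is the dichotomy: if the largest diagonal entry of the reduced form exceeds $q^2$, then $r(q^2,L)$ equals a representation number of a binary lattice, and a positive binary form represents the square of a prime at most $18$ times, contradicting $r(q^2,L)\ge 2q\ge 22$; hence the largest diagonal entry is at most $q^2$, which is what bounds $dL$ and makes the finite search meaningful. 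Your appeal to ``sparse representations'' gestures at this but never supplies it.
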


\begin{proof}  Let $L$ be a strongly $s$-regular  ternary $\z$-lattice with $m_s(L)=1$.  First, assume that $\mathfrak s(L)=\frac12\z$. Let $\{x_1,x_2,x_3\}$ be a Minkowski reduced basis for $L$ such that
$$
(B(x_i,x_j))\simeq \begin{pmatrix} 1&e&d\\e&a&c\\d&c&b \end{pmatrix} \quad (1\le a\le b  ~~~\text{and}~~~ 0 \le 2e \le 1,~-1\le 2d \le 1,~0 \le 2c \le a),
$$
where $a,b,2c,2d,2e$ are all integers and at least one of $2c,2d$ and $2e$ is odd. Let $p_t$ be the $t$-th smallest odd prime. Suppose, on the contrary,  that $p_1p_2\cdots p_t \mid dL$, whereas $p_{t+1} \nmid dL$ for some $t \ge 4$. 

First, assume that $t=4$. Since $3\cdot5\cdot7\cdot11 \mid dL$ and $13\nmid dL$ by assumption, we have 
\begin{equation} \label{13}
r(13^2,L)=r(1,L)\left(13+1-\left(\frac{-dL}{13}\right)\right) \ge 26.
\end{equation}
If $b \ge 13^2+1$, then $r(13^2,L)=r(13^2,[1,2e,a]) \le 18$. 
This is a contradiction and hence we have $1 \le a \le b \le 169$. For all possible finite cases, we may check by direct computations that there are no ternary $\z$-lattice satisfying the equation \eqref{13}.
The case when $t=5$ or $6$ can be dealt with similar manner to this.

Finally, assume that $t \ge 7$. Since $p_{t+1} \nmid dL$, we have
$$
r(p_{t+1}^2,L)=r(1,L)\left(p_{t+1}+1-\left(\frac{-dL}{p_{t+1}}\right)\right) \ge 46.
$$
If $t\ge 7,$  then $4p_{t+1}^4 < p_1\cdots p_t\le 4dL \le 4ab \le 4b^2$ by Bertrand-Chebyshev Theorem. Hence we have $p_{t+1}^2 < b$. Therefore we have 
$$
r(p_{t+1}^2,L)=r(p_{t+1}^2,[1,2e,a]) \le 18,
$$ 
for any positive integer $a$. This is a contradiction.

Since the proof of the case when $\mathfrak s(L)=\z$ is quite similar to the above, the proof are left to the readers. 
\end{proof}

\begin{thm} \label {important-3} There are exactly $207$ strongly $s$-regular  ternary $\z$-lattices $L$ with $m_s(L)=1$, which are listed in Tables 1 and 2. 
\end{thm}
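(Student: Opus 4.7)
The plan is to combine Lemma \ref{important-1} with Minkowski reduction to bound the discriminant of a candidate lattice, after which the classification reduces to a finite enumeration verified in MAPLE.

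First I would fix a Minkowski reduced basis $\{x_1,x_2,x_3\}$ for $L$; since $m_s(L)=1$ the smallest successive minimum is $1$, so we may take $Q(x_1)=1$ and write
$$
(B(x_i,x_j))=\begin{pmatrix}1&e&d\\ e&a&c\\ d&c&b\end{pmatrix},\qquad 1\le a\le b,
$$
with the usual Minkowski inequalities on $c,d,e$. By Lemma \ref{important-1} there is a small prime $p_0$, taken from $\{3,5,7\}$ or $\{3,5,7,11\}$ according to whether $\mathfrak s(L)=\z$ or $\tfrac12\z$, that does not divide $dL$. The strongly $s$-regular identity at $p_0$ then gives
$$
r(p_0^2,L)=r(1,L)\Bigl(p_0+1-\Bigl(\tfrac{-dL}{p_0}\Bigr)\Bigr)\ge 2p_0.
$$
Minkowski reduction implies that every $v\in L$ with $Q(v)<b$ lies in $\z x_1+\z x_2$, so if $b>p_0^2$ then $r(p_0^2,L)=r(p_0^2,[1,2e,a])$, which is bounded by an absolute constant (at most $18$, as in the proof of Lemma \ref{important-1}). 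Testing the identity simultaneously at every small prime $p\nmid dL$ forces $b$, and hence $dL$, to lie below an explicit bound.

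The second step is enumeration. I would list all Minkowski reduced Gram matrices compatible with the discriminant bound and with the constraints on $P(dL)$ coming from Lemma \ref{important-1}, further pruned by the explicit local structure of $L_p$ at each $p\mid dL$ imposed by Propositions \ref{odd} and \ref{even}. For every surviving candidate I would compute $r(n^2,L)$ in MAPLE for $n$ in a sufficient range and check the strongly $s$-regular identity directly; equivalently, one computes $r(n^2,\gen(L))$ via the Minkowski--Siegel formula and compares. After removing duplicates up to equivalence, the claim is that exactly the $207$ lattices of Tables 1 and 2 remain.

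The hard part will be the sheer size of the enumeration and the bookkeeping. The naive bound $b\le p_0^2\le 121$ is crude, so a very large number of Gram matrices must be examined, and the strongly $s$-regular check for each is itself a non-trivial representation-number computation. The feasibility of the search depends on sharpening the bound by combining the identity at several auxiliary primes $p\nmid dL$, and on exploiting the local conditions of Propositions \ref{odd} and \ref{even} to discard large families of candidates a priori. One finally has to verify that no equivalence class is missed or double-counted, which is the delicate point that pins down the exact count $207$.
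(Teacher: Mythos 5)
There is a genuine gap in your first step, and it is not merely that the bound is ``crude'' --- it is false, and lattices in the paper's own tables violate it. Your claimed contradiction at $p_0^2$ only materializes when $r(1,L)\bigl(p_0+1-\bigl(\tfrac{-dL}{p_0}\bigr)\bigr)$ exceeds the binary count, and in the bulk case $r(1,L)=2$ with $\bigl(\tfrac{-dL}{p_0}\bigr)=1$ the identity only forces $r(p_0^2,L)=2p_0$ (e.g.\ $6$ for $p_0=3$), which the binary section $[1,2e,a]$ can match: $r(9,[1,0,8])=6$, for instance. Concretely, $\langle 1\rangle\perp[8,8,72]$ and $\langle 1\rangle\perp[2,0,70]$ have $3\nmid dL$ but $c=72$ and $70$, far beyond your bound $p_0^2=9$; likewise $[1,9,70,0,0,1]$ in Table 2. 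Your enumeration region would therefore miss genuine strongly $s$-regular lattices and could never arrive at $207$. The paper's proof (and its finiteness theorem) instead exploits the identity at \emph{higher powers} $p_0^{2\lambda}$: the ternary side grows like $p_0^{\lambda}$ while a binary form satisfies $r(p_0^{2\lambda},[1,2e,a])\le 2(2\lambda+1)$, growing only linearly in $\lambda$. Thus for $p_0=3$ the paper compares $r(3^4,L)\in\{18,34\}$ with the binary count and obtains $2\le a\le b\le 9$ when $r(9,[1,2e,a])<6$ but only $2\le a\le b\le 81$ when $r(9,[1,2e,a])=6$; the bound you need is $p_0^4$-sized in the borderline subcases, not $p_0^2$.

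The second gap is in your verification step: strong $s$-regularity (equivalently $r(n^2,L)=r(n^2,\gen(L))$ for all $n$) is an infinite family of identities, so computing $r(n^2,L)$ in MAPLE ``for $n$ in a sufficient range'' can only rule candidates \emph{out} (one numerical failure suffices), never rule them \emph{in} --- and you invoke no Sturm-type bound that would convert a finite check into a proof. The paper rules candidates in structurally: class number one gives the identity for free by Minkowski--Siegel; for the class-number-two lattices it proves the genus is indistinguishable by squares via Lemma \ref{hecke-act} together with exact recursions coming from $\Lambda_p$ and $\Gamma_{p,i}$ sublattices (Proposition \ref{classnumber2}), e.g.\ $r(4n^2,S_1)=2r(4n^2,P_1)-r(n^2,S_1)$, which by induction settles all $n$ supported on $8dL$; and for the six class-number-three lattices it establishes the exact genus-average identities $2r(n^2,K_{1,t})=r(n^2,K_{2,t})+r(n^2,K_{3,t})$ (Lemma \ref{classnumber3}) and $r(3n+1,\ell_t)=3r(3n+1,L_t)=2r(3n+1,N_t)+r(3n+1,K_t)$ (Lemma \ref{nonclassic3}), whence $r(n^2,L)=r(n^2,\gen(L))$ for every $n$ (Propositions \ref{main} and \ref{classnumber3-1}). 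Without replacements for both ingredients --- the higher-power discriminant bound and the all-$n$ verification of the surviving class-number $\ge 2$ candidates --- your outline does not prove the theorem.
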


\begin{proof}  Every strongly $s$-regular ternary lattices representing $1$ is listed in Tables 1 and 2.  
In Table 1, all ternary lattices except those with dagger mark and $\langle 1\rangle \perp [4,4,9], \langle 1\rangle \perp[4,4,25]$ are class number one. Hence they are strongly $s$-regular.   There are exactly $12$ ternary lattices in Table 1 whose class number is $2$. The strongly $s$-regularities of all these lattices with dagger mark were already proved in \cite {ko}. Finally, both $\langle 1\rangle \perp [4,4,9]$ and $\langle 1\rangle \perp[4,4,25]$ highlighted in boldface have class number $3$, and the strongly $s$-regularities of these two lattices  will be proved in Proposition \ref{main}.

There are exactly $30$ strongly $s$-regular lattices $L$ such that $\mathfrak s(L)=\frac12 \z$ and $h(L)=2$, which are listed in Table $2$. In fact,  the $\z$-lattice $S_i$ in Table 2 has class number two and the other lattice in the genus is $T_i$, for any $1 \le i \le 15$.   The strongly $s$-regularities of these lattices will be considered in Proposition \ref{classnumber2}. Those lattices highlighted in boldface in Table 2 has class number 3, and the proof of the strongly $s$-regularities of these lattices will be given in Proposition \ref{classnumber3-1}.      

Let $L$ be a strongly $s$-regular  ternary $\z$-lattice. First, assume that $\mathfrak s(L)=\z$.  
Then $L=\langle 1\rangle \perp \ell$, for some binary lattice $\ell$ such that
$$
\ell =[a,2b,c] =\begin{pmatrix} a&b\\b&c\end{pmatrix} \qquad (0\le 2b\le a\le c).
$$ 
From the above theorem, the discriminant of $L$, which is $ac-b^2$, is not divisible by at least one  prime in $\{3,5,7\}$. We will use the fact that
if $p \nmid 2dL$, then
$$
r(p^{2t},L)=r(1,L)\left( \frac{p^{t+1}-1}{p-1}-\left(\frac{-dL}{p}\right)\frac{p^{t}-1}{p-1}\right).
$$
Assume that $L \simeq \langle 1\rangle \perp [1,0,s]$ for some positive integer $s$. If $3\nmid s$, then   
$$
r(9,L)=r(1,L)\cdot \left(4-\left(\frac{-dL}3\right)\right) \ge 12.
$$ 
Hence $s=1,2,4,5$ or $8$. Assume that $s=3s_1$, for some integer $s_1$ such that $5 \nmid s_1$.   Since $r(25,L) >r(25,[ 1,0,1])=12$, 
we have $3s_1 \le 25$.  Therefore $s_1=1,2,3,4,7$ or $8$. Assume that $s=15s_2$ for some integer $s_2$ such that $7 \nmid s_2$.  One may apply similar argument
to show that there does not exist a strongly $s$-regular  lattice in this case.   
 
 From now on, we assume that $r(1,L)=2$, that is, $a\ge 2$. 
 Assume that  $3\nmid dL$. 
Then we have $r(3^2,L)=6$ or $10$, and $r(3^4,L)=18$ or $34$. Hence we have $2\le a \le 9$. 
If $a=9$, then $c=9$.  In this case, one may easily show that 
$$
r\left(3^4,\langle1\rangle \perp \begin{pmatrix} 9&b\\b&9\end{pmatrix}\right) \ne 18, 34,
$$  
which is a contradiction. 
Next assume that $a=8$.  If $c\ge 10$, then 
$$
r(3^2,L)=r(3^2,[1,0,8])=6=2\left(4-\left(\frac{-dL}{3}\right)\right).
$$ 
Hence $\left(\frac{-dL}{3}\right)=1$, which  implies that $r(3^4,L)=18$.  If $c\ge82,$ then $r(3^4,L)=r(3^4,[1,0,8])=10$,  which is a contradiction. Therefore we have $8\le c \le 81$. 
For all possible finite cases, one may easily check only when $\ell$ is isometric to one of 
$$
\begin{array} {ll}
& [8,0,8], \ [8,0,10]^{\dagger}, \ [8,0,13]^{\dagger}, \ [8,0,16], \ [8,0,40], \\
&[8,4,18]^{\dagger}, \ [8,8,12], \ [8,8,24] \quad \text{and} \quad  [8,8,72],
\end{array}
$$ 
$L=\langle 1\rangle \perp \ell$ is  strongly $s$-regular.    Note that the class number of $L$ is one if $\ell$ is isometric to one of binary lattices given above, except binary lattices with dagger mark.  
When $\ell$ is isometric to one of binary lattices with dagger mark, the proof of the strongly $s$-regularity of $L$ is proved in \cite{ko}. 
The proof of the remaining cases, that is $2\le a\le 7$, is quite similar to this. In particular, the case when $\ell=[4,4,9]$, where the class number of $L=\langle 1\rangle \perp \ell$ is $3$ in this case, will be considered in Proposition \ref{main}.

Assume that $dL$ is divisible by $3$, but is not divisible by $5$.  In this case, we have $r(5^2,L)=10$ or $14$, and $r(5^4,L)=50$ or $74$.  Since $r(p^2, [1,0,a]) \le 6$ for any prime $p$ and any integer $a\ge 2$, we have $2 \le a \le c \le 25$.
For all possible cases, one may easily show that $L$ is strongly $s$-regular if and only if the class number of $L$ is one, except the case when $\ell=[4,4,25]$. For the exceptional case, the proof of the strongly $s$-regularity of $L$ will be given in Proposition \ref{main}. 

 Finally, assume that $15 \mid dL$ and $7\nmid dL$. For all possible cases, $L$ is strongly $s$-regular if and only if the class number of $L$ is one.

Now assume that $\mathfrak s(L)=\frac12 \z$. Let $\{x_1,x_2,x_3\}$ be a Minkowski reduced basis for $L$ such that
$$
(B(x_i,x_j))\simeq \begin{pmatrix} 1&e&d\\e&a&c\\d&c&b \end{pmatrix}=[1,a,b,2c,2d,2e],
$$
where $a,b,2c,2d,2e$ are integers such that $1\le a\le b$ and $0 \le 2e \le 1, \ -1\le 2d \le 1$, $0 \le 2c \le a$. Note that at least one of $2c,2d,2e$ is odd.   
In this case, the discriminant of $L$ is not divisible by at least one prime in $\{3,5,7,11\}$ by the above theorem. 
Assume that $a=1$ and $b=1$. Then clearly, $L\simeq [1,1,1,0,0,1]$  or $[1,1,1,1,1,1]$, all of which are strongly $s$-regular.   
 Next, assume that $a=1, e=0$ and $b \ge 2$. Let $p \in \{3,5,7,11\}$ be a prime not dividing $dL$. Since $r(p^2,L)=4p$ or $4(p+2)$ and $r(5^2,[1,0,1])=12$, $r(p^2,[1,0,1])=4$ for any $p \in \{3,7,11\}$,  we have $2 \le b \le p^2$.  One may easily show that there are exactly $6$ strongly $s$-regular  lattices in this case, all of which have class number $1$.   
 
  Next assume that $a=1, 2e=1$ and $b \ge 2$. In this case, since $r(1,L)=6$, we have $r(p^2,L)=6p$ or $6(p+2)$. Furthermore, since  $r(7^2,[1,1,1])=18$ and $r(p^2,[1,1,1])=6$ for any $p \in \{3,5,11\}$, we have $2\le b \le p^2$.  One may easily show that there are exactly $12$ strongly $s$-regular  lattices in this case, all of which have class number $1$.

From now on, we assume that $r(1,L)=2$, that is, $a \ge 2$. Assume further that $3 \nmid dL$. Since $r(3^2,L)=6$ or $10$, we have $2 \le a \le 9$. Furthermore, since  $r(3^4,L)=18$ or $34$, and  $r(3^{2n},[1,2e,a]) \le 2(2n+1)$ for any positive integer $n$,  we have 
$$
\begin{cases}
2 \le a \le b \le 9  \quad &\text{if $r(9,[1,2e,a])<6$}, \\ 
2 \le a \le b \le 81 \quad    &\text{if $r(9,[1,2e,a])=6.$}\\
 \end{cases}
 $$
 In this case, we have $30$ candidates of strongly $s$-regular  lattices as in the first row of Table $2$. Among them, there are exactly $18$ lattices having class number $1$.  Remaining $12$ lattices $T_1 \sim T_6$ and $S_1 \sim S_6$ in the first row of Table 2 has class number $2$. The proof of the strongly $s$-regularities of these lattices  will be considered in Proposition \ref {classnumber2}. 

Next assume that $3\mid dL$ and $5\nmid dL$. Since $r(5^2,L)=10$ or $14$, and 
$$
r(5^2,[1,2e,a])\le 6<10,
$$
 we have $2 \le a \le b \le 25$. In this case, we have twenty two candidates with class number 1, twelve lattices with class number 2, and three lattices with class number $3$. The proof of the strongly $s$-regularities of these lattices having class number $2$ (class number $3$) will be considered in Proposition \ref{classnumber2} (Proposition \ref{classnumber3-1}, respectively). Recall that all lattices highlighted in boldface in Tables 1 and 2 have class number $3$.

Now assume that $dL$ is divisible by $15$, but not divisible by $7$. In this case, we have $2\le a \le b \le 49$. Everything is quite similar to the above cases.  In this case, we have twelve candidates with class number 1, two lattices with class number 2, and one lattice with class number $3$. 

Finally, assume that $105\mid dL$ and $11\nmid dL$. In this case,  $L$ is isometric to one of
$4$ lattices listed in fourth line of Table 2. The proof of the strongly $s$-regularities of these 4 ternary lattices will be considered in Proposition \ref {classnumber2}. 
\end{proof}


\begin{table}[t]
\begin{tabular}{|c|l|}\hline
 &\hskip 4.5cm $\ell$ \\\hline
\multirow{8}{*}{$3\nmid dL$} & $[1,0,1]$, $[1,0,2]$, $[1,0,4]$, $[1,0,5]$, $[1,0,8]$, $[2,0,2]$, \\
 &  $[2,2,3]$, $[2,0,4]$, $[2,0,5]$, $[2,2,6]$, $[2,0,8]$, $[2,0,10]$,  \\
 &  $[2,0,13]^{\dagger}$, $[2,0,16]$, $[2,2,18]$, $[2,0,22]^{\dagger}$, $[2,2,33]^{\dagger}$, $[2,0,40]^{\dagger}$,\\
 &  $[2,0,70]^{\dagger}$, $[3,2,3]$, $[3,2,5]$, $[3,2,7]$, $[4,0,4]$, $[4,4,5]$,\\           
  &  $[4,0,8]$, $[4,4,8]$, $\textbf{[4,4,9]}$, $[5,0,5]$, $[5,4,6]^{\dagger}$, $[5,0,8]$,\\
 (47)&  $[5,0,10]$, $[5,4,12]$, $[5,0,13]^{\dagger}$, $[5,2,21]^{\dagger}$, $[5,0,25]$, $[5,0,40]$,\\
 &  $[6,4,6]$, $[6,4,8]^{\dagger}$, $[8,0,8]$, $[8,0,10]^{\dagger}$, $[8,8,12]$, $[8,0,13]^{\dagger}$,\\
 &  $[8,0,16]$, $[8,4,18]^{\dagger}$, $[8,8,24]$, $[8,0,40]$, $[8,8,72]$ \\\hline
   
\multirow{8}{*}{$3\mid dL$, $5\nmid dL$} & $[1,0,3]$, $[1,0,6]$, $[1,0,9]$, $[1,0,12]$, $[1,0,21]$, $[1,0,24]$, \\
 &  $[2,2,2]$, $[2,0,3]$, $[2,2,5]$, $[2,0,6]$, $[3,0,3]$, $[3,0,4]$,  \\
 &  $[3,0,6]$, $[3,0,9]$, $[3,0,12]$, $[3,0,18]$, $[4,4,4]$, $[4,0,6]$,\\
 &  $[4,4,7]$, $[4,0,12]$, $[4,4,13]$, $[4,0,24]$, $\textbf{[4,4,25]}$, $[5,2,5]$,\\           
 &  $[6,0,6]$, $[6,6,6]$, $[6,0,9]$, $[6,0,16]$, $[6,0,18]$, $[6,6,21]$,\\
(45) &  $[6,0,24]$, $[8,8,8]$, $[9,0,9]$, $[9,6,9]$, $[9,0,12]$, $[9,0,21]$,\\
 &  $[9,0,24]$, $[10,4,10]$,$[12,0,12]$, $[12,12,21]$, $[16,16,16]$,\\
 &  $[16,0,24]$, $[21,0,21]$, $[24,0,24]$, $[24,24,24]$ \\\hline
 
\multirow{2}{*}{$15\mid dL$, $7\nmid dL$} &  $[3,0,10]$, $[3,0,30]$, $[4,4,16]$, $[6,6,9]$, $[10,10,10]$, $[10,0,30]$, \\
 &  $[12,12,13]$, $[12,12,33]$, $[40,40,40] $   \\
 (9)& \\
 \hline                                         
\end{tabular}
\vskip 0.2cm
\caption{Strongly  $s$-regular lattices $L=\langle 1\rangle \perp \ell$}
\end{table}



\begin{table}[t]
\begin{tabular}{|c|l|} \hline
& \hskip 4.5cm $L$ \\\hline
\multirow{12}{*}{$3\nmid dL$}  
 & $[1,1,1,1,1,1]$, $[1,1,2,0,1,0]$, $[1,1,2,1,1,1]$,  \\
 & $[1,1,3,1,1,0]$, $[1,1,3,1,1,1]$, $[1,1,5,1,1,1]$,   \\
 & $[1,1,7,1,1,1]$, $[1,2,2,1,1,0]$, $[1,2,2,2,1,1]$,  \\
 & $[1,2,3,0,1,0]$, $[1,2,3,1,0,1]$, $T_1\!=\![1,2,4,1,1,1]$,  \\
 & $S_1\!=\![1,2,4,2,1,0]$, $[1,2,7,0,0,1]$, $[1,2,9,0,1,0]$,  \\
 & $S_2\!=\![1,2,23,0,1,0]$, $[1,3,3,2,1,1]$, $[1,3,4,2,0,1]$,  \\
 & $[1,3,5,1,1,1]$, $[1,3,5,3,1,1]$, $S_3\!=\![1,3,9,2,1,1]$,  \\
(37) & $S_4\!=\![1,3,10,0,0,1]$, $T_2\!=\![1,3,17,2,1,1]$, $[1,3,22,0,0,1]$,  \\
 & $[1,4,4,3,1,1]$, $[1,4,9,3,1,1]$, $T_3\!=\![1,5,5,1,1,0]$,  \\
 & $T_4\!=\![1,5,6,2,0,1]$, $S_5\!=\![1,5,19,5,1,0]$, $S_6\!=\![1,5,49,5,1,0]$,   \\
 & $[1,7,9,7,1,0]$, $[1,9,9,8,1,1]$, $T_5\!=\![1,9,10,0,0,1]$,  \\
 & $[1,9,15,5,0,1]$, $[1,9,21,7,0,1]$, $T_6\!=\![1,9,29,8,1,1]$, \\
 & $[1,9,70,0,0,1]$ \\
 \hline
   
\multirow{16}{*}{$3\mid dL$, $5\nmid dL$} 
 & $[1,1,1,0,0,1]$, $[1,1,2,0,0,1]$, $[1,1,2,1,1,0]$,   \\
 & $[1,1,3,0,0,1]$, $[1,1,4,0,0,1]$, $[1,1,5,1,1,0]$,   \\ 
 & $[1,1,6,0,0,1]$, $[1,1,11,1,1,0]$, $[1,1,12,0,0,1]$,  \\
 & $[1,1,18,0,0,1]$, $[1,2,2,1,1,1]$, $[1,2,3,1,1,0]$,  \\
 & $[1,2,3,2,1,0]$, $[1,2,4,2,1,1]$, $[1,2,5,1,1,1]$,  \\
 & $S_7\!=\![1,2,7,0,1,0]$, $S_8\!=\![1,2,9,2,1,0]$, $S_9\!=\![1,2,10,1,0,1]$,  \\ 
 & $[1,3,4,3,1,0]$, $T_7\!=\![1,3,5,1,0,1]$, $T_8\!=\![1,3,6,0,0,1]$,  \\
 & $L_1\!=\!\textbf{[1,3,7,0,1,0]}$, $[1,3,8,2,0,1]$, $[1,4,4,2,1,1]$,  \\
 & $[1,4,5,2,1,0]$, $T_9\!=\![1,4,5,2,1,1]$, $[1,4,6,3,0,1]$,  \\
(47) & $[1,4,11,2,1,0]$, $[1,4,13,2,1,1]$, $[1,5,5,4,1,1]$,  \\
 & $[1,5,7,1,0,1]$, $[1,5,7,2,1,1]$, $S_{10}\!=\![1,5,13,5,1,1]$,  \\
 & $S_{11}\!=\![1,5,15,3,0,1]$, $[1,6,7,0,1,0]$, $T_{10}\!=\![1,6,11,6,1,0]$,  \\
 & $S_{12}\!=\![1,6,25,0,1,0]$, $[1,7,7,5,1,1]$, $T_{11}\!=\![1,7,11,5,1,0]$,  \\
 & $L_4\!=\!\textbf{[1,7,12,0,0,1]}$, $[1,7,13,5,1,1]$, $[1,7,18,0,0,1]$,  \\
 & $M_6\!=\!\textbf{[1,7,19,5,1,1]}$, $[1,9,13,9,1,0]$, $T_{12}\!=\![1,13,13,8,1,1]$,  \\
 & $[1,13,15,3,0,1]$, $[1,13,23,13,1,0]$  \\
 \hline
 
\multirow{6}{*}{$15\mid dL$, $7\nmid dL$}
 & $[1,1,4,0,1,0]$  $[1,1,10,0,0,1]$, $[1,1,30,0,0,1]$,\\
 & $[1,2,7,2,1,1]$, $[1,3,3,1,1,1]$, $[1,4,5,4,1,0]$, \\
 & $[1.4.15,0,0,1]$, $[1,5,9,5,1,0]$, $[1,6,13,6,1,0]$, \\
 & $[1,7,7,3,1,0]$, $S_{13}\!=\![1,7,10,0,0,1]$, $T_{13}\!=\![1,7,11,5,1,1]$, \\
(18) & $L_{10}\!=\!\textbf{[1,7,30,0,0,1]}$, $[1,7,31,5,1,1]$, $[1,10,19,0,1,0]$, \\   
 & $[1,15,19,15,1,0]$, $[1,19,19,8,1,1]$, $[1,19,30,0,0,1]$ \\
 \hline                                         

\multirow{1}{*}{$105\mid dL$, $11\nmid dL$} &  $S_{14}\!=\![1,2,15,0,0,1]$, $T_{14}\!=\![1,4,7,0,0,1]$, \\       
(4) & $S_{15}\!=\![1,7,17,7,1,0]$, $T_{15}\!=\![1,11,11,7,1,1]$  \\
 \hline                                         
\end{tabular}
\vskip 0.2cm \caption{Strongly $s$-regular lattices $L$ with $\mathfrak s(L)=\frac12\z$}
\end{table}

\section{Non trivial strongly $s$-regular  ternary lattices}

In this section, we prove the strongly $s$-regularities of ternary lattices with class number greater than $1$ in Tables 1 and 2.

\begin{lem}\label{hecke-act}
Let $L$ be a ternary $\z$-lattice. For any $L' \in \gen(L)$, if $r(n^2,L)=r(n^2,L')$ for any integer $n$ whose prime factor divides $8dL$, then the genus of $L$ is indistinguishable by squares.   
\end{lem}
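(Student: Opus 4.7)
The plan is to proceed by induction on $\Omega(n_2)$, the number of prime factors (with multiplicity) of the ``good'' part $n_2$ of $n$, where we write $n = n_1 n_2$ with $P(n_1) \subseteq P(8dL)$ and $(n_2, 8dL)=1$. The base case $\Omega(n_2) = 0$ is precisely the hypothesis.

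For the inductive step I would fix a prime $p \mid n_2$ (necessarily $p \nmid 8dL$), set $m = n/p$, and apply to each $K \in \gen(L)$ the Hecke-Eichler identity for $T(p^2)$ already exhibited in the proof of Lemma \ref{notrepresent1}:
$$
r(n^2,K) = \sum_{[L''] \in \gen(L)} \frac{r^*(pL'',K)}{o(L'')}\, r(m^2,L'') - \left(\frac{-dL}{p}\right) r(m^2,K) - p\cdot r\!\left(\frac{m^2}{p^2},K\right).
$$
By the inductive hypothesis both $r(m^2,\cdot)$ and $r(m^2/p^2,\cdot)$ are constant on $\gen(L)$; writing $c$ for the common value of $r(m^2,L'')$, the last two terms on the right are $K$-independent, while the first equals $c \cdot S(K)$ with $S(K) := \sum_{[L'']} r^*(pL'',K)/o(L'')$. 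Thus the inductive step closes as soon as one shows $S(K)$ is independent of $K \in \gen(L)$. The degenerate case $c = 0$ needs no such input: the identity then forces $r(n^2,K) = -p\, r(m^2/p^2,K)$, and nonnegativity of $r$ (together with the inductive hypothesis applied to $m^2/p^2$) gives $r(n^2,K) = 0$ uniformly on $\gen(L)$.

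The hard part will be verifying the constancy of $S(K)$ at the good prime $p$. My strategy is to recognize $S(K)$ as the $K$-th row sum of the $p$-th Brandt matrix acting on $\gen(L)$, and to apply Siegel's mass formula for primitive representations of $pL''$ by members of the genus: the resulting product of local densities is a genus invariant at every $q \ne p$, while at $p$ it depends only on $K_p$, which is common to the whole genus since $p \nmid 8dL$. Granted this constancy, the Hecke identity expresses $r(n^2,K)$ as a sum of $K$-independent terms, the induction closes, and one concludes $r(n^2,L) = r(n^2,L')$ for every $L' \in \gen(L)$ and every $n$, i.e., $\gen(L)$ is indistinguishable by squares.
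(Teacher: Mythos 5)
Your induction scaffold is sound, and since the paper gives no argument of its own for this lemma (its proof is just the citation to Lemma 2.5 of \cite{ko}), your Hecke-operator induction on $\Omega(n_2)$ is the natural reconstruction of what that citation must contain: the base case is exactly the hypothesis, and at a prime $p\nmid 8dL$ the commutation relation for $T(p^2)$ transfers constancy on the genus from $m^2$ and $m^2/p^2$ up to $n^2=p^2m^2$. One harmless inaccuracy: when $p\mid m$ the middle term of the relation carries the symbol $\left(\frac{-dL\,m^2}{p}\right)=0$ rather than $\left(\frac{-dL}{p}\right)$; since that coefficient depends only on $m$, $p$ and $dL$ and never on $K$, your argument is unaffected either way.

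The genuine gap is your proposed justification of the constancy of $S(K)=\sum_{[L'']}r^*(pL'',K)/o(L'')$. Siegel's mass formula for primitive representations computes, for \emph{fixed} $L''$, the genus-weighted average over the representing lattice, namely $\frac{1}{w(L)}\sum_{[K]\in \gen(L)}\frac{r^*(pL'',K)}{o(K)}$, as a product of local densities. That is a statement about a weighted column of the Brandt matrix $B_{K,L''}(p)=r^*(pL'',K)/o(L'')$; it cannot isolate the individual row sums $S(K)$, which is what your induction needs --- two classes in the genus could a priori realize the same column averages with unequal row sums. Fortunately the fact you need is true, with two short proofs. First, a constant-term argument: the commutation relation is an identity of $q$-expansions, $\theta_K\vert T(p^2)=\sum_{[L'']}B_{K,L''}(p)\,\theta_{L''}$, every $\theta_{L''}$ has constant term $1$, and $T(p^2)$ in weight $3/2$ multiplies the constant term by $p+1$; comparing $0$-th Fourier coefficients gives $S(K)=p+1$ for every $K\in\gen(L)$, in particular $S(K)$ is independent of $K$. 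Second, a lattice count: since the isometries of $pL''$ onto a fixed sublattice $M\simeq pL''$ of $K$ number exactly $o(L'')$, the quantity $B_{K,L''}(p)$ counts the primitive sublattices of $K$ isometric to $pL''$, so $S(K)$ counts all such sublattices as $L''$ ranges over the genus; these sublattices agree with $K$ at every prime $q\ne p$ and are characterized by a condition on $K_p$ alone, and $K_p\simeq L_p$ is unimodular and common to the whole genus, so the count is genus-constant. With either repair your induction closes; note also that once $S(K)$ is constant the degenerate case $c=0$ needs no separate treatment, since $r(n^2,K)=c\,S-\left(\frac{-dL}{p}\right)c-p\,c'$ is $K$-independent outright.
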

\begin{proof}
See Lemma 2.5 of \cite {ko}.
\end{proof}

Let $L$ be a ternary $\z$-lattice. Assume that the $\frac12\z_p$-modular component in a Jordan decomposition of $L_p$ is nonzero isotropic. Assume that $p$ is a prime dividing $4dL$.
Then by Weak Approximation Theorem, there exists a basis $\{x_1, x_2, x_3 \}$ for $L$ such that
$$
(B(x_i,x_j))\equiv\begin{pmatrix}0&\frac12\\ \frac12&0\end{pmatrix}\perp \langle p^{\ord_p(4dL)} \delta\rangle \ (\text{mod} \ p^{\ord_p(4dL)+1}),
$$
where $\delta$ is an integer not divisible by $p$. We define
$$
\Gamma_{p,1}(L) = \z px_1 + \z x_2+ \z x_3 \quad \text{and} \quad \Gamma_{p,2}(L) = \z x_1 + \z px_2+ \z x_3.
$$  
Note that the lattice $\Gamma_{p,i}(L)$ depends on the choice of basis for $L$. However the set
$\{\Gamma_{p,1}(L),\Gamma_{p,2}(L)\}$ is independent of the choices of the basis for $L$. 
 There are exactly two sublattices of $L$ with index $p$ whose norm is contained in $p\z$. They are, in fact, $\Gamma_{p,1}(L)$ and $\Gamma_{p,2}(L)$.  For some properties of these sublattices of $L$, see \cite {jlo}. 

\begin{lem} \label{iso}
Under the same assumptions given above,  we have 
$$
r(pn,L)=r(pn,\Gamma_{p,1}(L))+r(pn,\Gamma_{p,2}(L))-r(pn,\Lambda_p(L)).
$$ 
\end{lem}

\begin{proof}
See Proposition 4.1 of \cite{jlo}.
\end{proof}

\begin{prop} \label {classnumber2}
For any $i \ \ (1 \le i\le 15)$, the genus $\gen(S_i)$ is indistinguishable by squares. Therefore $S_i$ and $T_i$ are strongly $s$-regular  for any $1 \le i\le 15$.
\end{prop}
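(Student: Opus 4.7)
The plan is to invoke Lemma \ref{hecke-act}, which reduces the task to verifying $r(n^2,S_i)=r(n^2,T_i)$ only for those positive integers $n$ whose prime factors all divide $8dS_i$. Since each $dS_i$ has only a handful of prime divisors and there are just fifteen pairs to handle, this converts the problem into a finite, prime-by-prime check that can be organized by induction on $\ord_p(n)$ for each relevant prime $p$ dividing $8dS_i$.

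For each pair $(S_i,T_i)$ and each such prime $p$, I would examine the Jordan decomposition of $(S_i)_p$ (equivalently of $(T_i)_p$, since the two lattices share the same genus). When the unimodular component at an odd prime $p$ is anisotropic, Lemma \ref{aniso} yields
\[
r(pm,S_i)=r(pm,\Lambda_p(S_i)) \quad\text{and}\quad r(pm,T_i)=r(pm,\Lambda_p(T_i)),
\]
so it suffices to compare the Watson transforms. When instead the $\tfrac12\z_p$-modular component contains a hyperbolic plane, Lemma \ref{iso} gives the decomposition
\[
r(pm,L)=r(pm,\Gamma_{p,1}(L))+r(pm,\Gamma_{p,2}(L))-r(pm,\Lambda_p(L)),
\]
so the comparison reduces to matching $\Gamma_{p,1}(S_i), \Gamma_{p,2}(S_i), \Lambda_p(S_i)$ against $\Gamma_{p,1}(T_i), \Gamma_{p,2}(T_i), \Lambda_p(T_i)$ up to isometry. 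The hope is that after iterating these reductions the resulting auxiliary lattices either become pairwise isometric or fall into a genus of class number one, at which point the equality of representation counts is immediate and one can push back through the inductive steps.

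Once the genus $\gen(S_i)$ is shown to be indistinguishable by squares, the second assertion of the proposition is automatic: by the definition of indistinguishability, $r(n^2,S_i)=r(n^2,T_i)=r(n^2,\gen(S_i))$ for every $n$, and the Minkowski--Siegel formula then forces the strong $s$-regularity of both $S_i$ and $T_i$.

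The main obstacle is the bookkeeping across all fifteen pairs: one must verify that at every prime dividing $8dS_i$ the local data permits one of the two reduction lemmas to apply, and that the auxiliary sublattices produced can be paired off correctly (possibly after exchanging the roles of $\Gamma_{p,1}$ and $\Gamma_{p,2}$). Some cases are likely to require iterating the $\lambda_p$-transformation or combining it with the $\Gamma_{p,j}$-construction at several primes, and for a few of the pairs one probably has to resort to direct computation, using the explicit Minkowski-reduced matrices listed in Table 2, to settle the base case of the induction.
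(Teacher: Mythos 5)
Your proposal takes essentially the same route as the paper's proof: reduce via Lemma \ref{hecke-act} to integers supported on the primes dividing $8dS_i$, then use Lemma \ref{aniso} at primes with anisotropic unimodular component and Lemma \ref{iso} at the isotropic ones to obtain recursions whose auxiliary lattices coincide for $S_i$ and $T_i$, closing the induction from the base equality $r(1,S_i)=r(1,T_i)$. The only point worth noting is that in the hardest cases the auxiliary lattices are neither pairwise isometric across the pair nor of class number one (e.g.\ $S_{14,3}$ has class number $3$), but this does not derail your plan, since the intertwined recursions are identical on the $S$- and $T$-sides with matching initial values, which is exactly the induction you describe.
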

\begin{proof}

Since proofs are quite similar to each other, we only provide the proofs of the cases when $i=1,3,13,14$ and $15$ as representatives.  We put
$$
\begin{array}{llll}
&P_1=[2,4,4,2,2,0],      &P_2=[47,47,47,0,47,47],  &P_3=[1,1,10,0,0,1],\\
&Q=[4,8,16,2,4,4],       &S_{14,1}=[1,2,60,0,0,1], &S_{14,2}=[2,4,60,0,0,2],\\  
&S_{14,3}=[2,4,15,0,0,2],&T_{14,1}=[1,4,28,0,0,1], &T_{14,2}=[4,4,28,0,0,2],\\
&T_{14,3}=[4,4,7,0,0,2].& &\\
\end{array}
$$ 
   First, we consider the case when $i=1$. By Lemma \ref {aniso}, we see that $r(13^2n^2,S_1)=r(n^2,S_1)$ and $r(13^2n^2,T_1)=r(n^2,T_1)$ for any integer $n$. Also by Lemma \ref {iso}, we have
$$
r(4n^2,S_1)=2r(4n^2,P_1)-r(n^2,S_1)~\text{and}~ r(4n^2,T_1)=2r(4n^2,P_1)-r(n^2,T_1),
$$
for any integer $n$. Since $r(1,S_1)=r(1,T_1)$,  $\gen(S_1)$ is indistinguishable by squares by Lemma \ref {hecke-act}. 

We consider the case when $i=3$. Note that $d(S_3)=2^{-1}\cdot47$. By Lemma \ref {iso}, we have
$$
r(47^2n^2,S_3)=2r(47^2n^2,P_2)-r(n^2,S_3), r(47^2n^2,T_3)=2r(47^2n^2,P_2)-r(n^2,T_3),
$$
for any integer $n$. If $x^2+3y^2+9z^2+xy+2yz+zx=4n^2$, then $x,y,z$ are all even. Hence we have
$$
r(4n^2,S_3)=r(n^2,S_3)\quad\text{and}\quad
r(4n^2,T_3)=r(n^2,T_3).
$$
Therefore  $\gen(S_3)$ is indistinguishable by squares by Lemma \ref {hecke-act}.

Note that $d(S_{13})=2^{-1}\cdot 3^3\cdot 5$.  By Lemma \ref {aniso}, we have $r(3^2n^2,S_{13})=r(n^2,P_3)$, $r(5^2n^2,S_{13})=r(n^2,S_{13})$  and $r(3^2n^2,T_{13})=r(n^2,P_3)$, $r(5^2n^2,T_{13})=r(n^2,T_{13})$ for any integer $n$. If $x^2+7y^2+10z^2+xy=4n^2$, then $x,y,z$ are all even. Hence we have
$$
r(4n^2,S_{13})=r(n^2,S_{13})\quad\text{and}\quad
r(4n^2,T_{13})=r(n^2,T_{13}).
$$
Therefore the genus $\gen(S_{13})$ is indistinguishable by squares by Lemma \ref {hecke-act}.

Now, we consider the $\z$-lattice $S_{14}$, which is one of the most difficult cases.  Note that $d(S_{14})={2^{-2}}\cdot3\cdot5\cdot7$. By Lemma \ref {aniso}, we have
$$
r(p^2n^2,S_{14})=r(n^2,S_{14}) \quad \text{and} \quad r(p^2n^2,T_{14})=r(n^2,T_{14}), 
$$
for any prime $p\in\{3,5,7\}$. Let $\{x_1,x_2,x_3\}$ be the basis for $S_{14}$ such that $Q(ax_1+bx_2+cx_3)=a^2+ab+2b^2+15c^2$.  Assume that $Q(ax_1+bx_2+cx_3)=4n^2$.
Then we have $a\equiv c\pmod 2$, $b\equiv0\pmod 2$ or $c\equiv0 \pmod 2$. This implies that for $z=ax_1+bx_2+cx_3$, 
$$
z \in \z(2x_1)+\z(2x_2)+\z(x_1+x_3) \quad \text{or} \quad z \in \z(x_1)+\z(x_2)+\z(2x_3).
$$
Therefore we have, for any integer $n$, 
$$
r(4n^2,S_{14})=r(4n^2,Q)+r(4n^2,S_{14,1})-r(n^2,S_{14}).
$$
Similarly, we also have
$$
r(4n^2,T_{14})=r(4n^2,Q)+r(4n^2,T_{14,1})-r(n^2,T_{14}).
$$
Furthermore, one may easily show that
$$
\begin{array}{rl}
r(4n^2,S_{14,1})=&\!\!\!2r(4n^2,S_{14,2})-r(n^2,S_{14}),\\
r(4n^2,T_{14,1})=&\!\!\!2r(4n^2,T_{14,2})-r(n^2,T_{14}),
\end{array}
$$
and
\begin{equation} \label{14-1}
\begin{array}{rl}
r(4n^2,S_{14,3})=r(4n^2,S_{14,2})=&\!\!\!2r(n^2,S_{14})-r(n^2,S_{14,3}),\\
r(4n^2,T_{14,3})=r(4n^2,T_{14,2})=&\!\!\!2r(n^2,T_{14})-r(n^2,T_{14,3}).
\end{array}
\end{equation}
By combining all equalities given above, we have
\begin{equation} \label{14-2}
\begin{array}{rl} 
r(4n^2,S_{14})=&\!\!\!r(4n^2,Q)+2r(n^2,S_{14})-2r(n^2,S_{14,3}),\\
r(4n^2,T_{14})=&\!\!\!r(4n^2,Q)+2r(n^2,T_{14})-2r(n^2,T_{14,3}).
\end{array}
\end{equation}
Since $r(1,S_{14})=r(1,T_{14})=2$ and $r(1,S_{14,3})=r(1,T_{14,3})=0$, we have $r(2^{2t},S_{14})=r(2^{2t},T_{14})$ for any positive integer $t$ by \eqref{14-1} and \eqref{14-2}. Therefore the genus of $S_{14}$ is indistinguishable by squares by Lemma \ref {hecke-act}, and $r(n^2,S_{14,3})=r(n^2,T_{14,3})$ for any integer $n$. Note that the class number of $S_{14,3}$ is 3. In fact, the proof of the strongly $s$-regularities of $S_{9}$ is quite similar to this. 

Finally by Lemma \ref {aniso}, we have
$$
r(p^2n^2,S_{15})=r(n^2,S_{15}) \quad \text{and} \quad r(p^2n^2,T_{15})=r(n^2,T_{15}) 
$$ 
for any prime $p\in\{3,5,7\}$. Let $\{y_1,y_2,y_3\}$ be the basis for $S_{15}$ such that $Q(ay_1+by_2+cy_3)=a^2+7b^2+17c^2+7bc+ca$. Assume that $Q(ay_1+by_2+cy_3)=4n^2$. Then we have $a\equiv b\pmod 2$ and $c\equiv0\pmod 2$. Therefore we have 
$$
r(4n^2,S_{15})=r(4n^2,\z(2y_1)+\z(y_1+y_2)+\z(2y_3)),
$$
which implies that for any integer $n$, 
$$
r(4n^2,S_{15})=r(n^2,S_{14}).
$$
Similarly, we also have
$$
r(4n^2,T_{15})=r(n^2,T_{14}).
$$
Therefore  $\gen(S_{15})$ is indistinguishable by squares by Lemma \ref {hecke-act}.
\end{proof}

Now we consider the lattices having class number $3$.  We define
$$
K_{1,t}=\langle1\rangle \perp \begin{pmatrix} 4&2 \\2&2^33^t+1\end{pmatrix}, \  \ K_{2,t}=\langle1,1,2^53^t\rangle, \ \ K_{3,t}=\begin{pmatrix} 2&0&1 \\0&2&1 \\1&1&2^33^t+1\end{pmatrix},
$$
for any non negative integer $t$. 

\begin{lem} \label {classnumber3} For any non negative integer $t$, 
ternary $\z$-lattices $K_{1,t}, K_{2,t}$ and $K_{3,t}$ are in the same genus. Furthermore, we have
$$
2r(n^2,K_{1,t})=r(n^2,K_{2,t})+r(n^2,K_{3,t}),
$$
 for any integer $n$.
\end{lem}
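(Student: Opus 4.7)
The plan proceeds in two stages: first verifying that $K_{1,t}$, $K_{2,t}$, $K_{3,t}$ all lie in a common genus, and then establishing the linear relation on the square-representation counts. For the genus claim, I would begin by computing $dK_{i,t}=2^5\cdot 3^t$ directly from the Gram matrices, so the three discriminants agree. At any prime $p\notin\{2,3\}$ every $K_{i,t}$ is unimodular and represents $1$, so local isometry is automatic. At $p=2$ I would exhibit each lattice in a basis yielding the Jordan decomposition $\langle\varepsilon_1,\varepsilon_2\rangle\perp\langle 2^5\cdot 3^t\varepsilon_3\rangle$ with matching units and Hasse invariants; for $K_{2,t}$ this is built in, while for $K_{1,t}$ and $K_{3,t}$ the $2\times 2$ and $3\times 3$ blocks diagonalize over $\z_2$ after a small change of basis. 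At $p=3$ with $t\geq 1$ an analogous local diagonalization yields the common Jordan shape $\langle 1,1,3^t\varepsilon\rangle$; the case $t=0$ imposes no condition at $3$.

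For the representation identity I would imitate the sublattice machinery deployed in Proposition \ref{classnumber2}, particularly the $S_{14}$ and $S_{15}$ arguments. Each $K_{i,t}$ has scale $\z$, and at $p=3$ with $t\geq 1$ the unimodular component of $(K_{i,t})_3$ is anisotropic, so Lemma \ref{aniso} gives $r(9n^2,K_{i,t})=r(n^2,\lambda_3(K_{i,t}))$ uniformly in $i$. At $p=2$ I would perform a parity-based sublattice decomposition as in the $S_{14}$ proof: inspect each Gram form modulo $2$, enumerate the index-$2$ sublattices with norm in $2\z$, and deduce an identity expressing $r(4n^2,K_{i,t})$ as a signed combination of representation numbers of those sublattices minus $r(n^2,K_{i,t})$. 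The crux is to verify that after all these decompositions the difference $2r(n^2,K_{1,t})-r(n^2,K_{2,t})-r(n^2,K_{3,t})$ collapses to zero, either by induction on $t$ with base case $t=0$ handled by direct computation, or by identifying the resulting sublattices up to isometry and observing that the $K_{1,t}$-sublattices are, as a multiset, the average of the $K_{2,t}$- and $K_{3,t}$-sublattices.

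The main obstacle will be the simultaneous bookkeeping at $p=2$ and $p=3$ as $t$ varies. Following the $S_{14}$ template, the $p=2$ step forces one into a nested family of index-$2$ sublattices of each $K_{i,t}$ whose isometry types must be pinned down precisely; the interaction with the $\lambda_3$-transformation at $p=3$ must then produce a clean inductive passage $t\mapsto t-1$, amounting essentially to $\lambda_3(K_{i,t})\simeq K_{i,t-1}$ up to a reindexing of the three classes. Verifying this last identification uniformly in $i$, and confirming that the inductive step does not degenerate at $t=0$ or $t=1$ where the $3$-adic structure changes, is the technical heart of the proof and is where I would expect to spend the most effort.
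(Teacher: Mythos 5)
Your genus verification matches the paper's (discriminant $2^5\cdot 3^t$ plus local comparison at $2$ and $3$), but your plan for the counting identity has a genuine gap: none of your reductions ever reaches the substantive case $\gcd(n,6)=1$. The $\lambda_3$-reduction via Lemma \ref{aniso} only relates $r(9n^2,K_{i,t})$ to smaller data, and your $p=2$ decompositions only produce identities for $r(4n^2,K_{i,t})$; squares of integers prime to $6$ are touched by neither, and they are exactly where the identity $2r(n^2,K_{1,t})=r(n^2,K_{2,t})+r(n^2,K_{3,t})$ has content. Your fallback, ``base case handled by direct computation,'' is an infinite verification for each fixed $t$ unless you add a Sturm-bound/theta-series argument, which you do not. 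Moreover the inductive anchor you propose, $\lambda_3(K_{i,t})\simeq K_{i,t-1}$, is false: one Watson step at $3$ lands outside the family (e.g.\ $\lambda_3(K_{2,t})\simeq\langle 3,3,2^5 3^{t-1}\rangle$), and only $\lambda_3^2(K_{i,t})$ returns to it with $t$ replaced by $t-2$ (for $t\ge 2$), so even the skeleton of your induction is misconfigured and would leave two base strata $t=0,1$ unproved.

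The paper needs no induction on $t$ and no $3$-adic input at all: $t$ is fixed throughout. For odd $n$, each $R(n^2,K_{i,t})$ is split by a parity argument into representations by two index-$2$ sublattices: for $K_{1,t}$, in $x^2+4y^2+4yz+(2^33^t+1)z^2=n^2$ exactly one of $x,z$ is odd, giving $r(n^2,K_{1,t})=r(n^2,\langle 1,4,2^53^t\rangle)+r(n^2,\langle 4\rangle\perp[4,4,2^33^t+1])$; symmetrically $K_{2,t}$ contributes the first sublattice type twice and $K_{3,t}$ the second type twice, so the relation holds for odd squares immediately. For even arguments one checks directly that $r(4n^2,K_{1,t})=r(4n^2,K_{2,t})=r(4n^2,K_{3,t})=r(n^2,\langle 1,1,2^33^t\rangle)$, making the relation trivial there. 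Your closing remark about the $K_{1,t}$-sublattices being, as a multiset, the average of those of $K_{2,t}$ and $K_{3,t}$ is precisely the paper's mechanism---but it must be applied to the \emph{odd} representation numbers directly, not inside an inductive scheme driven by $r(4n^2,\cdot)$ and $r(9n^2,\cdot)$, where it never gets to act on the cases that matter.
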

\begin{proof} Note that $d(K_{i,t})=2^5\cdot 3^t$ for any $i=1,2,3$. 
By checking  local structures at $p=2$ and $3$, one may easily show that all ternary $\z$-lattices $K_{1,t}, K_{2,t}$ and $K_{3,t}$ are in the same genus for any integer $t\ge0$.
Fix a non negative integer $t$. 

Note that for any integer $n$, one may easily show that
$$
r(4n^2,K_{1,t})=r(4n^2,K_{2,t})=r(4n^2,K_{3,t})=r(n^2,\langle1,1,2^33^t\rangle).
$$  
Assume  that  $n$ is odd.   If $x^2+4y^2+4yz+(2^33^t+1)z^2=n^2$, then either $x$ or $z$ is odd, but not both. Hence we have 
 $$
 r(n^2,K_{1,t})=r(n^2,\langle1,4,2^53^t\rangle)+r\left(n^2,\langle4\rangle\perp \begin{pmatrix}4&2 \\ 2&2^33^t+1\end{pmatrix}\right).
 $$
 Similarly, we have $r(n^2,K_{2,t})=2r(n^2,\langle1,4,2^53^t\rangle)$.  Let $K_{3,t}=\z x_1+\z x_2+\z x_3$ such that 
$$  
(B(x_i,x_j))=\begin{pmatrix} 2&0&1 \\0&2&1 \\1&1&2^33^t+1\end{pmatrix}.
$$
Let $v \in K_{3,t}$ such that $Q(v)=n$.  Let $a,b,c$ be integers such that $v=a(x_1-x_2)+b(x_2+x_3)+cx_3$. Since  $Q(v) \equiv b^2+c^2 \equiv 1 \pmod 2$,
either $b$ or $c$ is odd, but not both. Hence for any odd integer $n$,
$$
r(n, K_{3,t})=r(n, \z(x_1-x_2)+\z(2x_2+2x_3)+\z x_3)+r(n,\z(x_1-x_2)+\z(x_2+x_3)+\z(2x_3)).
$$
Since 
$$   
 \z(x_1-x_2)+\z(2x_2+2x_3)+\z x_3 \simeq \z(x_1-x_2)+\z(x_2+x_3)+\z(2x_3) \simeq \langle4\rangle\perp \begin{pmatrix}4&2 \\ 2&2^33^t+1\end{pmatrix},
$$
we have
$$
r(n^2,K_{3,t})=2r\left(n^2,\langle4\rangle\perp \begin{pmatrix}4&2 \\ 2&2^33^t+1\end{pmatrix}\right),
$$
for any odd integer $n$. Consequently,  for any integer $n$,
$$
2r(n^2,K_{1,t})=r(n^2,K_{2,t})+r(n^2,K_{3,t}).
$$ 
This completes the proof. 
\end{proof}

\begin{prop} \label {main} Both ternary $\z$-lattices $K_{1,0}$ and $K_{1,1}$ defined above are strongly $s$-regular  ternary $\z$-lattices.  
\end{prop}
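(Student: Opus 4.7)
The plan is to prove $r(n^2, K_{1,t}) = r(n^2, \gen(K_{1,t}))$ for every integer $n$ and every $t \in \{0, 1\}$, since strongly $s$-regularity then follows immediately from the Minkowski-Siegel formula $r(n_1^2 n_2^2, \gen(K_{1,t})) = r(n_1^2, \gen(K_{1,t})) \prod_{p \nmid 8 dK_{1,t}} h_p(dK_{1,t}, \lambda_p)$ recalled in the introduction. As noted in the proof of Theorem \ref{important-3}, the class number of $\gen(K_{1,t})$ equals $3$ for $t = 0, 1$, so by Lemma \ref{classnumber3} its three classes are represented by $K_{1,t}, K_{2,t}, K_{3,t}$. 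The Minkowski-Siegel averaging
$$
w(K_{1,t}) \cdot r(n^2, \gen(K_{1,t})) = \frac{r(n^2, K_{1,t})}{o(K_{1,t})} + \frac{r(n^2, K_{2,t})}{o(K_{2,t})} + \frac{r(n^2, K_{3,t})}{o(K_{3,t})}
$$
combined with the identity $r(n^2, K_{2,t}) + r(n^2, K_{3,t}) = 2 r(n^2, K_{1,t})$ from Lemma \ref{classnumber3} reduces the desired equality to
$$
\bigl(r(n^2, K_{1,t}) - r(n^2, K_{3,t})\bigr) \cdot \left(\tfrac{1}{o(K_{2,t})} - \tfrac{1}{o(K_{3,t})}\right) = 0.
$$
Since $r(1, K_{1,t}) = 2$ whereas $r(1, K_{3,t}) = 0$ (the Gram matrix of $K_{3,t}$ has diagonal $(2, 2, 8 \cdot 3^t + 1)$, so $K_{3,t}$ does not represent $1$), the problem reduces to showing $o(K_{2,t}) = o(K_{3,t})$.

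The easier half is $o(K_{2,t}) = 16$, coming from the sign changes on each of the three coordinates of $\langle 1, 1, 2^5 3^t\rangle$ together with the swap of the first two. For $K_{3,t}$, I would first identify the minimum vectors via the completion-of-squares identity
$$
Q(a x_1 + b x_2 + c x_3) = 2\!\left(a + \tfrac{c}{2}\right)^{\!2} + 2\!\left(b + \tfrac{c}{2}\right)^{\!2} + 8 \cdot 3^t c^2,
$$
which forces $c = 0$ for any vector of norm $2$; hence the minimum vectors are exactly $\pm x_1, \pm x_2$, and every isometry of $K_{3,t}$ restricts to one of the $8$ inner-product-preserving permutations of this four-element set. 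For each such permutation, writing the image of $x_3$ as $x_3' = \alpha x_1 + \beta x_2 + \gamma x_3$, the two inner-product conditions $B(f(x_1), x_3') = B(x_1, x_3)$ and $B(f(x_2), x_3') = B(x_2, x_3)$ determine $\alpha, \beta$ linearly in the odd integer $\gamma$. The norm equation $Q(x_3') = 8 \cdot 3^t + 1$ together with the bound $Q(x_3') \geq 8 \cdot 3^t \gamma^2$ given by the same completion-of-squares restricts $\gamma$ to $\pm 1$, and each choice extends to a genuine lattice automorphism. This gives $o(K_{3,t}) = 8 \cdot 2 = 16$ as required.

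The main obstacle is this order computation for $K_{3,t}$, in particular the careful verification that each of the $8$ admissible sign-and-swap permutations of $\{\pm x_1, \pm x_2\}$ actually lifts to an integral isometry of the lattice via both $\gamma = 1$ and $\gamma = -1$. This is a short case check, straightforward for $t = 0, 1$ because $|\gamma| \geq 3$ would force $Q(x_3') \geq 72 \cdot 3^t$, vastly exceeding $8 \cdot 3^t + 1$. Once $o(K_{2,t}) = o(K_{3,t}) = 16$ is in hand, the reduction above yields $r(n^2, K_{1,t}) = r(n^2, \gen(K_{1,t}))$ for all $n$, and strongly $s$-regularity of $K_{1,t}$ follows by transferring the Minkowski-Siegel identity from the genus to the individual class.
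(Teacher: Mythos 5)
Your proof is correct and takes essentially the same approach as the paper: the paper likewise combines the identity $2r(n^2,K_{1,t})=r(n^2,K_{2,t})+r(n^2,K_{3,t})$ of Lemma \ref{classnumber3} with the genus average $r(n^2,\gen(K_{1,i}))=4\left(\tfrac18 r(n^2,K_{1,i})+\tfrac1{16}r(n^2,K_{2,i})+\tfrac1{16}r(n^2,K_{3,i})\right)=r(n^2,K_{1,i})$, using exactly the isometry-group orders $8,16,16$ that you verify, and then transfers the Minkowski--Siegel identity from the genus to the class (via Lemma 2.3 of \cite{ko}, which is your closing step). Your only deviations are refinements of detail: you observe that only the equality $o(K_{2,t})=o(K_{3,t})$ is actually needed (the value of $o(K_{1,t})$ drops out of the reduction), and you supply the completion-of-squares computation of these orders, which the paper asserts without proof.
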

\begin{proof}  Note that 
$$
\gen(K_{1,0})=\{K_{1,0}, K_{2,0}, K_{3,0}\}\quad \text{and} \quad \gen(K_{1,1})=\{K_{1,1}, K_{2,1}, K_{3,1}\}.
$$ 
Therefore by Lemma \ref {classnumber3}, we have $$r(n^2,\gen(K_{1,i}))=4\left(\frac{1}{8}r(n^2,K_{1,i})+\frac{1}{16}r(n^2,K_{2,i})+\frac{1}{16}r(n^2,K_{3,i})\right)=r(n^2,K_{1,i}),$$for any integer $n$ and any $i=0, 1$. Therefore by Lemma 2.3 of \cite {ko}, we see that both $K_{1,0}$ and $K_{1,1}$ are  strongly $s$-regular.   
\end{proof}

\begin{rmk}
If a strongly $s$-regular  lattice $M$ has class number two, then the other lattice in the genus of $M$ is also strongly $s$-regular by Remark 3.3 of \cite{ko}.   This is not true in general if the class number of a lattice is greater than two. For example, both ternary $\z$-lattices $K_{1,0}$ and $K_{1,1}$ are strongly $s$-regular,  however all the other lattices in  $\gen(K_{1,0})$ and $\gen(K_{1,1})$ are not strongly $s$-regular. 
\end{rmk}

For any positive integer $t$, we define
$$
\ell_t=\begin{pmatrix} 1&\frac12 \\ \frac12&1\end{pmatrix} \perp \langle 3t\rangle, \ \ L_t=\begin{pmatrix} 1&\frac12 \\ \frac12&7\end{pmatrix} \perp \langle 3t\rangle,\ \ M_t=\begin{pmatrix} 1&\frac12&\frac12 \\ \frac12&7&\frac52 \\ \frac12&\frac52&3t+1\end{pmatrix}
$$
and 
$$
N_t=\begin{pmatrix} 3&\frac32&0 \\ \frac32&3&\frac32 \\ 0&\frac32&3t+1\end{pmatrix}, \ \  K_t=\begin{pmatrix} 1&\frac12 \\ \frac12&1\end{pmatrix} \perp \langle 27t\rangle.
$$

\begin{lem} \label{nonclassic3} Let $t$ be any positive integer. For any positive integer $n$, we have
$$
r(3n+1,\ell_t)=3r(3n+1,L_t)=3r(3n+1,M_t)=2r(3n+1,N_t)+r(3n+1,K_t).
$$
\end{lem}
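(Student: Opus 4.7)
The plan is to realize $L_t, M_t, N_t, K_t$ as explicit index-$3$ sublattices of $\ell_t$, each cut out by a single linear congruence modulo $3$ on the coordinates, and then to compute $r(3n+1,\cdot)$ by tallying vectors of norm $3n+1$ according to the residues of their coordinates.

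Fix the orthogonal decomposition $\ell_t = (\z x_1 + \z x_2) \perp \z x_3$ realizing the stated Gram matrix, so that $Q(Ax_1+Bx_2+Cx_3) = A^2+AB+B^2+3tC^2$. Because $4(A^2+AB+B^2) = (2A+B)^2 + 3B^2$, we have $Q(v) \equiv (A-B)^2 \pmod 3$, so $r(3n+1, \ell_t)$ counts exactly those $v \in \ell_t$ with $A \not\equiv B \pmod 3$. The first step is to verify, by direct Gram matrix computation, the isometries
\begin{align*}
L_t &\cong \z x_1 + \z(2x_1-3x_2) + \z x_3 = \{v = Ax_1+Bx_2+Cx_3 \in \ell_t : 3 \mid B\},\\
K_t &\cong \z x_1 + \z x_2 + \z(3x_3) = \{v \in \ell_t : 3 \mid C\},\\
M_t &\cong \z x_1 + \z(2x_1-3x_2) + \z(x_1-x_2+x_3) = \{v \in \ell_t : 3 \mid B+C\},\\
N_t &\cong \z(x_1+x_2) + \z(2x_1-x_2) + \z(x_1-x_2+x_3) = \{v \in \ell_t : 3 \mid A-B+C\};
\end{align*}
each of these sublattices has index $3$ in $\ell_t$, and a routine expansion confirms that the induced Gram matrices agree with those defining $L_t, K_t, M_t, N_t$.

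Next I would exploit the isometries $\rho, \tau, \nu$ of $\ell_t$ given by
\begin{gather*}
\rho\colon x_1\mapsto x_2,\ x_2\mapsto x_2-x_1,\ x_3\mapsto x_3,\\
\tau\colon x_1\leftrightarrow x_2,\ x_3\mapsto x_3, \qquad \nu\colon x_1\mapsto x_1,\ x_2\mapsto x_2,\ x_3\mapsto -x_3,
\end{gather*}
which act on coordinate residues as $(A,B,C) \mapsto (-B, A+B, C),\ (B,A,C),\ (A,B,-C)$. The orbit of $(1,0)$ under $\langle\rho\rangle$ in $\f_3^2$ runs through $(1,0), (0,1), (2,1), (2,0), (0,2), (1,2)$, which are precisely the six residue classes with $A \not\equiv B \pmod 3$. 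Letting
\[
t_{A,B,C} = \#\bigl\{v \in \ell_t : Q(v) = 3n+1,\ v \equiv Ax_1+Bx_2+Cx_3 \pmod{3\ell_t}\bigr\},
\]
it follows that $t_{A,B,C}$ depends only on whether $C \equiv 0 \pmod 3$ or not; write $\alpha$ for its common value on the six classes with $C \equiv 0$ (and $A \not\equiv B$) and $\beta$ for its common value on the twelve classes with $C \not\equiv 0$ (and $A \not\equiv B$).

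Finally, intersecting the set of norm-$(3n+1)$ vectors with the congruence description of each sublattice yields
\begin{gather*}
r(3n+1, \ell_t)=6\alpha+12\beta,\qquad r(3n+1, K_t)=6\alpha,\qquad r(3n+1, N_t)=6\beta,\\
r(3n+1, L_t) = r(3n+1, M_t)=2\alpha+4\beta,
\end{gather*}
where for $M_t$ the constraint $C \equiv -B \pmod 3$ picks out $C \equiv 0$ exactly when $B \equiv 0$ (two $(A,B)$-classes, contributing $2\alpha$) and otherwise forces $C \not\equiv 0$ (four classes, contributing $4\beta$), while for $N_t$ the constraint $C \equiv B-A \pmod 3$ together with $A \not\equiv B$ forces $C \not\equiv 0$ always. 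The four equalities of the lemma follow at once. The only real bookkeeping challenge is pinning down the four index-$3$ sublattices with Gram matrices matching $L_t, K_t, M_t, N_t$ on the nose; once those explicit descriptions (and the corresponding congruences) are in hand, the residue-class tally is purely combinatorial.
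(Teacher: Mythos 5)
Your proof is correct: I checked the four sublattice identifications ($\{3\mid B\}$, $\{3\mid C\}$, $\{3\mid B+C\}$, $\{3\mid A-B+C\}$ do have Gram matrices $L_t$, $K_t$, $M_t$, $N_t$ in the bases you give, each of index $3$ in $\ell_t$), and the residue-class tally $6\alpha+12\beta = 3(2\alpha+4\beta) = 2\cdot 6\beta + 6\alpha$ delivers all four equalities at once. The paper's own proof runs on the same underlying mechanism --- reduction of $Q=A^2+AB+B^2+3tC^2$ modulo $3$ and congruence sublattices of index $3$ --- but organizes the count differently: for $r(3n+1,\ell_t)=3r(3n+1,L_t)$ it observes that when $A\not\equiv B \pmod 3$ exactly one of $A\equiv 0$, $B\equiv 0$, $A+B\equiv 0 \pmod 3$ holds, so the representing vectors split into three disjoint sublattices ($\z(3x_1)+\z x_2+\z x_3$, $\z x_1+\z(3x_2)+\z x_3$, $\z(x_1-x_2)+\z(3x_2)+\z x_3$), each checked to be isometric to $L_t$ by a Gram computation; it then reruns the identical trichotomy in the basis $\{x_1,x_2,x_1+x_2+x_3\}$ to get $3r(3n+1,M_t)$, and sketches the basis $\{x_1,x_1+x_2,x_1+x_3\}$ for $2r(3n+1,N_t)+r(3n+1,K_t)$. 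Your version replaces these three separate basis-dependent partitions by one equidistribution statement: the isometries $\rho$ and $\nu$ (note $\tau$ is never actually needed) show the class counts $t_{A,B,C}$ take only the two values $\alpha$ and $\beta$ on the eighteen admissible classes mod $3\ell_t$, after which each lattice is counted through a single congruence sublattice. What this buys is uniformity --- one computation yields all four representation numbers simultaneously and makes fully explicit the $N_t$, $K_t$ case, which is the step the paper leaves tersest --- at the modest cost of verifying that $\rho$, $\nu$ are isometries and that $\langle\rho\rangle$ acts transitively on the six classes with $A\not\equiv B \pmod 3$; by contrast the paper's three-fold partitions never need equidistribution, only the isometry type of each piece.
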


\begin{proof} Let $\{x_1,x_2,x_3\}$ be the basis for $\ell_t$ whose Gram matrix is given above. Assume that $Q(ax_1+bx_2+cx_3)=3n+1$. Since $(a-b)^2\equiv 1 \pmod 3$, we have $a\equiv 0 \pmod 3$ or $b\equiv 0 \pmod 3$ or $a+b\equiv 0 \pmod 3$, however any of two cases cannot occur simultaneously. Therefore we have
$$
\begin{array} {rl}
r(3n+1,\ell_t)&\!\!\!\!\!=r(3n+1,\z(3x_1)+\z x_2+\z x_3)+r(3n+1,\z x_1+\z (3x_2)+\z x_3)\\
              &+r(3n+1,\z(x_1-x_2)+\z (3x_2)+\z x_3),
 \end{array}              
$$
which implies that $r(3n+1,\ell_t)=3r(3n+1,L_t)$. Now let $y_1=x_1,y_2=x_2$ and $y_3=x_1+x_2+x_3$. Then 
$$
(B(y_i,y_j))=\begin{pmatrix} 1&\frac12&\frac32 \\ \frac12&1&\frac32 \\ \frac32&\frac32&3t+3\end{pmatrix}.
$$
Assume $Q(ay_1+by_2+c y_3)=3n+1$. Since $(a-b)^2\equiv 1 \pmod 3$, we have $a\equiv 0 \pmod 3$ or $b\equiv 0 \pmod 3$ or $a+b\equiv 0 \pmod 3$, however any of two cases cannot occur simultaneously. Therefore we have
$$
\begin{array} {rl}
r(3n+1,\ell_t)&=r(3n+1,\z(3x_1)+\z x_2+\z x_3)+r(3n+1,\z x_1+\z (3x_2)+\z x_3)\\
              &+r(3n+1,\z(x_1-x_2)+\z (3x_2)+\z x_3),
 \end{array}              
$$
which implies that $r(3n+1,\ell_t)=3r(3n+1,M_t)$. Finally, if we choose a basis $\{x_1,x_1+x_2,x_1+x_3\}$ for $\ell_t$, then we may prove that 
 $r(3n+1,\ell_t)=2r(3n+1,N_t)+r(3n+1,K_t)$.
\end{proof}

\begin{prop} \label{classnumber3-1} The ternary $\z$-lattices $L_1,L_4,L_{10}$ and $M_6$ are all strongly s-regular.  
\end{prop}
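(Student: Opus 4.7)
The proof follows the template of Proposition \ref{main}, with Lemma \ref{nonclassic3} playing the role of Lemma \ref{classnumber3}. By Lemma 2.3 of \cite{ko}, it suffices to show that $r(n^2, L) = r(n^2, \gen(L))$ for every integer $n$, for each $L \in \{L_1, L_4, L_{10}, M_6\}$.

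For $t \in \{1, 4, 10\}$, a direct computation yields $d(L_t) = d(M_t) = d(N_t) = d(K_t) = 81t/4$, and one verifies local equivalence at each prime to conclude that these lattices lie in a common genus. Since the class number is three, exactly three of them form a full set of class representatives; the analogous description holds for $\gen(M_6)$. With the automorphism group orders $o(L_t), o(N_t), o(K_t)$ and the weight $w(L_t)$ in hand, the Minkowski--Siegel formula reads
\begin{equation*}
r(n, \gen(L_t)) \;=\; \frac{1}{w(L_t)} \sum_{[L'] \in \gen(L_t)} \frac{r(n, L')}{o(L')}.
\end{equation*}
For $n$ coprime to $3$, we have $n^2 \equiv 1 \pmod 3$, so Lemma \ref{nonclassic3} gives the identity $3 r(n^2, L_t) = 2 r(n^2, N_t) + r(n^2, K_t)$. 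Substituting this into the weighted sum and using the key structural relation $o(K_t) = 2\, o(N_t)$ causes the weighted combination to collapse, yielding $r(n^2, \gen(L_t)) = r(n^2, L_t)$ whenever $3 \nmid n$.

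For $n$ divisible by $3$, so that $9 \mid n^2$, the Jordan component of $L_t$ at $p = 3$ is anisotropic by inspection of the local structure, so Lemma \ref{aniso} gives $r(n^2, L_t) = r(n^2, \Lambda_3(L_t))$. An induction on $\ord_3(n)$, with base case supplied by the previous step applied to the Watson-transformed lattice, extends the equality $r(n^2, L_t) = r(n^2, \gen(L_t))$ to every integer $n$, and Lemma 2.3 of \cite{ko} then completes the proof. The case of $M_6$ is handled by an entirely parallel argument, using $3 r(3m+1, M_6) = 2 r(3m+1, N_6) + r(3m+1, K_6)$ from Lemma \ref{nonclassic3} with $t = 6$. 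The main obstacle is the explicit identification of genus representatives and automorphism group orders, and in particular the verification of the structural relation $o(K_t) = 2\, o(N_t)$ that drives the collapse of the weighted sum; the local analysis at $p = 3$ needed to handle the case $3 \mid n$ is also delicate.
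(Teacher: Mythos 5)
Your skeleton is the paper's: reduce to showing $r(n^2,L)=r(n^2,\gen(L))$ for each of $L_1,L_4,L_{10},M_6$, feed the congruence $n^2\equiv 1\pmod 3$ into Lemma \ref{nonclassic3} to get $3r(n^2,L_t)=2r(n^2,N_t)+r(n^2,K_t)$, and collapse the Minkowski--Siegel weighted average. Your treatment of the case $3\nmid n$ is correct, and in fact your distillation is sharper than the paper's presentation: substituting $r(n^2,K_t)=3r(n^2,L_t)-2r(n^2,N_t)$ into the weighted sum, the coefficient of $r(n^2,N_t)$ is $\frac{1}{o(N_t)}-\frac{2}{o(K_t)}$, so the collapse occurs precisely when $o(K_t)=2\,o(N_t)$, independently of $o(L_t)$ --- which is why the same mechanism works for $L_t$ (with $o(L_t)=8$) and for $M_6$ (with $o(M_6)=4$), the paper instead plugging in the explicit orders $8$ (resp.\ $4$), $12$, $24$. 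The computational facts you flag (genus $=\{L_t,N_t,K_t\}$, resp.\ $\{M_6,N_6,K_6\}$, and the automorphism orders) are exactly what the paper asserts by computation.

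The genuine gap is in your case $3\mid n$. You apply Lemma \ref{aniso} only to $L_t$, obtaining $r(n^2,L_t)=r(n^2,\Lambda_3(L_t))$, and then invoke an induction on $\ord_3(n)$ ``with base case supplied by the previous step applied to the Watson-transformed lattice.'' But the quantity you must match, $r(n^2,\gen(L_t))$, involves $r(n^2,N_t)$ and $r(n^2,K_t)$ as well, and nothing in your sketch relates those to any transformed lattice; note also that Lemma \ref{nonclassic3} only covers targets $\equiv 1\pmod 3$, so the collapse identity is simply unavailable when $n^2\equiv 0\pmod 9$. The missing ingredient is the paper's equation \eqref{rr}: for $t\not\equiv 0\pmod 3$ one has $\lambda_3^2(L_t)\simeq\lambda_3^2(N_t)\simeq\lambda_3^2(K_t)\simeq\ell_t$ (and similarly with $M_t$ in place of $L_t$ when $3\mid t$), so that two applications of Lemma \ref{aniso} to \emph{every} class in the genus give $r(9m,L_t)=r(9m,N_t)=r(9m,K_t)=r(m,\ell_t)$. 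Once all three representation numbers coincide on squares divisible by $9$, the weighted average trivially equals $r(n^2,L_t)$, and no induction is needed at all; your anisotropy check at $p=3$ must accordingly be performed not just for $L_t$ but also for $\lambda_3(L_t)$ (the second application) and for $N_t$, $K_t$ and their transforms. With that uniform Watson-transform statement inserted, your argument closes and coincides with the paper's proof.
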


\begin{proof} First, note that $N_t$ is contained in $\gen(K_t)$ for any positive integer $t$, $L_t\in  \gen(N_t)$ if $t \equiv 1 \pmod 3$, and $M_t \in  \gen(N_t)$ if $t \equiv 0 \pmod 3$. For any integer $t \not \equiv 0 \pmod 3$, since $\lambda_3^2(L_t)\simeq \lambda_3^2(N_t)\simeq \lambda_3^2(K_t)\simeq \ell_t$, 
we have 
\begin{equation} \label{rr}
r(9n,L_t)=r(9n,N_t)=r(9n,K_t)=r(n,\ell_t).
\end{equation} 
 If $t \equiv 0 \pmod 3$, then we have 
$$
r(9n,M_t)=r(9n,N_t)=r(9n,K_t)=r(n,\ell_t). 
$$ 
For $t=1,4$ or $10$, one may easily show that $\gen(L_t)=\{L_t,N_t,K_t\}$ and $\gen(M_6)=\{M_6,N_6,K_6\}$. Now 
by equation \eqref{rr} and Lemma \ref{nonclassic3}, we have
$$
r(n^2,L_t)=4\left(\frac{r(n^2,L_t)}8+\frac{r(n^2,N_t)}{12}+\frac{r(n^2,K_t)}{24}\right)=r(n^2,\gen(L_t)),
$$
for any $t=1,4$ or $10$. Furthermore, we have  
$$
r(n^2,M_6)=\frac83\left(\frac{r(n^2,M_6)}4+\frac{r(n^2,N_6)}{12}+\frac{r(n^2,K_6)}{24}\right)=r(n^2,\gen(M_6)).
$$
This completes the proof.   \end{proof}

\begin{thm}  Let $L$ be a ternary $\z$-lattice representing $1$. Then $L$ is strongly $s$-regular if and only if $L$ satisfies $r(n^2,L)=r(n^2,\gen(L))$ for any integer $n$.    
\end{thm}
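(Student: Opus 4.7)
The plan is to prove the two directions separately, noting that the forward implication is essentially a packaging corollary of the classification work in Sections 2 and 3, while the reverse implication is a one-line consequence of the Minkowski-Siegel formula.

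For the \emph{if} direction, assume $r(n^2,L)=r(n^2,\gen(L))$ for every integer $n$. Write $n = n_1 n_2$ as in the definition of strong $s$-regularity. Applying the Minkowski-Siegel formula to the genus of $L$, we get
\begin{equation*}
r(n_1^2 n_2^2, L) = r(n_1^2 n_2^2, \gen(L)) = r(n_1^2, \gen(L)) \prod_{p \nmid 8dL} h_p(dL, \lambda_p) = r(n_1^2, L) \prod_{p \nmid 8dL} h_p(dL, \lambda_p),
\end{equation*}
which is exactly the condition that $L$ is strongly $s$-regular. So this direction needs no further work.

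For the \emph{only if} direction, the approach is to invoke the classification: by Theorem \ref{important-3}, every strongly $s$-regular ternary lattice $L$ with $m_s(L)=1$ is (up to equivalence) one of the $207$ lattices in Tables 1 and 2. I would then sort these by class number and check the identity $r(n^2,L)=r(n^2,\gen(L))$ in each regime:
\begin{itemize}
\item If $h(L)=1$, the identity is trivial since $\gen(L) = \{[L]\}$.
\item If $h(L)=2$ (the $12$ dagger-marked entries in Table 1 plus $S_1,T_1,\dots,S_{15},T_{15}$ in Table 2), Proposition \ref{classnumber2} together with the results of \cite{ko} for the dagger-marked cases establishes that the genus is indistinguishable by squares, i.e.\ $r(n^2,L)=r(n^2,L')$ for every $L' \in \gen(L)$; weighted-averaging these equal values immediately gives $r(n^2,L)=r(n^2,\gen(L))$.
\item If $h(L)=3$ (the boldface lattices $\langle 1\rangle \perp [4,4,9]$, $\langle 1\rangle \perp [4,4,25]$ in Table 1 and $L_1,L_4,L_{10},M_6$ in Table 2), the identity $r(n^2,L) = r(n^2, \gen(L))$ is already proved directly inside Propositions \ref{main} and \ref{classnumber3-1} as the final output of the averaging identities in Lemmas \ref{classnumber3} and \ref{nonclassic3}.
\end{itemize}

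Since these three cases are exhaustive, the theorem follows. The main obstacle is not in the present proof but in the heavy lifting already completed earlier: namely, the finiteness classification yielding the precise list of $207$ lattices in Theorem \ref{important-3}, and the ad hoc sublattice/Watson/Hecke arguments that establish the higher-class-number cases. Once those tools are in hand, the final theorem is a clean synthesis, and in fact it can be restated as saying that, among lattices representing $1$, strong $s$-regularity coincides with the genus-representation identity for squares; this is the natural extension and resolution of the Cooper--Lam framework highlighted in the abstract.
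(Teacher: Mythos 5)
Your proposal is correct and follows exactly the paper's own route: the authors likewise dispatch the \emph{if} direction as trivial via the Minkowski--Siegel identity and obtain the \emph{only if} direction as a direct consequence of Theorem \ref{important-3} together with Propositions \ref{classnumber2}, \ref{main} and \ref{classnumber3-1}. Your case analysis by class number (trivial for $h=1$, indistinguishability by squares plus weighted averaging for $h=2$, and the explicit genus-averaging identities for the boldface $h=3$ lattices) is just a fleshed-out version of the paper's one-line synthesis.
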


\begin{proof} Note that ``if" is trivial. The  ``only if" is the direct consequence of Theorem \ref  {important-3} and Propositions \ref{classnumber2}, \ref{main} and \ref{classnumber3-1}.
\end{proof}

\end{document}